\renewcommand{\paragraph}{\@startsection{paragraph}{4}{0ex}%
   {-3.25ex plus -1ex minus -0.2ex}%
   {1.5ex plus 0.2ex}%
   {\normalfont\normalsize\bfseries}}
\providecommand{\U}[1]{\protect\rule{.1in}{.1in}}
\newtheorem{thm}{Theorem}
\newtheorem{Assumption}{Assumption}
\newtheorem{propo}{Proposition}
\newtheorem{rmq}{Remark}
\newtheorem{lemma}{Lemma}
\newenvironment{proof}[1][Proof]
{\noindent\textbf{#1:} }{\hfill\rule{0.5em}{0.5em}}
\newcommand{{\resizebox{}{!}{\input .pstex_t}}}[2]{{\resizebox{#1}{!}{\input #2.pstex_t}}}
\def\dis{\displaystyle}
\def\0vec{\mathbf{0}}
\def\om{\omega}
\def\Om{\Omega}
\def\\Phivec{\mathbf{\Phi}}
\def\gvec{\mathbf{g}}
\def\wvec{\mathbf{w}}
\newcommand{\ii}{\iiTL }
\newcommand{\iiTL}{\int_0^T\!\!\!\!\int_0^L }
\newcommand{\be}{\begin{equation}}
\newcommand{\ee}{\end{equation}}
\newcommand{\ba}{\begin{eqnarray}}
\newcommand{\ea}{\end{eqnarray}}
\title{\textbf{On the One-Dimensional Nonlinear Monodomain Equations with Moving Controls}}
\author{
	\textsc{Karl Kunisch}\thanks{Institut f\"ur Mathematik und Wissenschaftliches Rechnen, Karl-Franzens-Universit\"at, Heinrichstra\ss e 36, 8010 Graz,
Austria and Johann Radon Institute for Computational and Applied Mathematics, \"Osterreichische Akademie der Wissenschaften. E-mail: {\tt karl.kunisch@uni-graz.at}. Work partially supported
	by the ERC advanced grant 668998 (OCLOC) under the EU's H2020 research program.}\quad
	\and
	\textsc{Diego A. Souza}\thanks{Department of Mathematics, Federal University of Pernambuco, UFPE, CEP 50740-545, Recife,
PE, Brazil. E-mail: {\tt diego.souza@dmat.ufpe.br}. Work partially supported
	by the ERC advanced grant 668998 (OCLOC) under the EU's H2020 research program.}}
\date{}
\begin{document}
\maketitle



\begin{abstract}
	In this paper local exact controllability to the trajectories for the one-dimensional monodomain
	equations with the FitzHugh-Nagumo and Rogers-McCulloch ionic models using distributed
	controls with a moving support is investigated. In a first step a new Carleman inequality for the adjoint of the
	linearized monodomain equations, under assumptions on the movement of the control region, is
	presented. It leads to null controllability at any positive time. Subsequently, a local result concerning
	the exact controllability to the trajectories for the nonlinear monodomain equations is deduced.
\end{abstract}

\

\noindent {\bf keyword:} exact controllability to the trajectories, monodomain equations, moving controls,
	FitzHugh-Nagumo ionic model, Rogers-McCulloch ionic model, heat equation with memory.
	
	\
	
\noindent {\bf Mathematics Subject Classification:}  35K57, 93B05, 93B07, 93C20



\section{Introduction}

	The main objective of this paper is to study controllability properties of nonlinear reaction-diffusion
	systems which model the electrical activity in the heart. Our reference model for the heart's electrical
	activity is the so-called {\it bidomain model}, formulated mathematically in \cite{tung}, see also e.g.
	\cite[Chapter $2$]{sundnes} and the references therein. Next, we describe  such a model in order to
	motivate the controllability results studied. Since the  bidomain model is not frequently discussed in
	mathematical publications we allow more space for our description. At the end of this section we shall
	relate our results to results on the exact controllability of the heat equation with memory terms.

	We start by saying that heart muscle cells belong to a class of cells known as {\it excitable cells}, which
	have the ability to respond actively to an electrical stimulus. In absence of an electrical stimulus cells
	remain electrically quiescent at a given potential difference across the cell membrane. At rest the potential
	inside the cells, called the {\it intracellular potential}, is negative compared to the {\it extracellular potential},
	which is the potential in the interstitial space between the cells and the potential difference is referred to as
	{\it transmembrane voltage}. When such cells are stimulated electrically they depolarize the transmembrane
	voltage towards less negative or positive values. If the delivered stimulus is strong enough to depolarize the
	cell above an intrinsic firing threshold an active response is elicited, otherwise the cell returns to its resting
	state. The active response of the cell is of \emph{all or none} type, that is, the elicited active response is always
	the same independently of the applied stimulus strength. This threshold behavior discriminating between
	active non-linear and passive linear response is referred to as \emph{excitability}. The depolarization of cells
	above the firing threshold is a very fast process which is followed by a slower {\it repolarization} that restores
	the potential difference to its resting value. The complete cycle of depolarization and repolarization is
	called an {\it action potential}. In tissue the intracellular spaces of cells are interconnected and thus an
	ongoing action potential in one cell can depolarize the resting potential in neighboring cells up to the firing
	threshold and thus provide a mechanism for the propagation of electrical signals. This ability enables an
	electric activation occurring in one part of the heart to propagate through the muscle and activate the
	entire heart.

	The bidomain model is a macroscopic model based on the assumption that, at mesoscopic scale, cardiac
	tissue can be viewed as partitioned into two ohmic conducting media separated by the cell membrane: the
	{\it intracellular} medium formed by the interior space of cardiac cells and the {\it extracellular} medium
	which represents the space between cells. Both domains are assumed to be continuous, and they both fill
	the complete volume of the heart muscle. This latter assumption of \emph{interpenetrating domains}, that
	is intracellular space, extracellular space and membrane co-exist at any point in space, does not reflect
	biophysical reality at a cellular size scale, but can be justified at a mesoscopic size scales based on homogenization
	arguments. The justification for viewing the intracellular space as continuous is that the muscle
	cells are interconnected via conducting pores referred to as {\it gap junctions}. Because of the gap junctions,
	substances such as ions or small molecules may pass directly from one cell to another, without entering the space
	between the two cells (the extracellular domain). Having said this, in each of the two domains a macroscopic
	electric potential is defined and the membrane acts as an electrical insulator between the two domains,
	since otherwise we could not have a potential difference between the intracellular and extracellular domains.
	Although the resistance of the cell membrane itself is very high, it allows electrically charged molecules (ions)
	to pass through specific channels embedded in the membrane. Then an electrical current referred to as
	\emph{ionic current} will cross the membrane, the magnitude of which will depend on the \emph{driving
	force} across the membrane, that is the difference between transmembrane voltage and the equilibrium potential
	for a given ion species, and on the channel's permeability to this ion species. The {\it transmembrane
	voltage} is defined as the potential difference across the membrane for every point in the heart. The bidomain
	formulation recognizes that cardiac tissue is electrically anisotropic and that current flows in both extracellular
	and intracellular domains. Bidomain models are necessary to simulate defibrillation and the biophysical
	mechanisms underlying the initiation of propagation with pacing stimuli where the current is injected in the
	extracellular domain, and have the further advantage that the most commonly measured cardiac electrical signals
	-- extracellular potential and transmembrane voltage -- are direct model outputs.

	In the following let $\Omega$ denote a sample cardiac tissue in dimension two or three and denote
	by $\nu$ the outward unit normal vector. Let $u_i = u_i(t, x)$ and $u_e = u_e(t, x)$ be the intracellular
	and extracellular electric potentials, respectively, and  denote by $v = u_i - u_e$ the transmembrane
	electric potential. The anisotropic properties of the media are modeled by intracellular and extracellular
	conductivity tensors $\sigma_i(x)$ and $\sigma_e(x)$, respectively. We assume that
	$\sigma_e,\,\sigma_i\in C^1(\overline\Om;\mathcal{M}_{N}(\mathbb{R})$
	($N\leq3$) with $(\sigma_e(x)\xi,\xi)\geq \sigma_{e,0}|\xi|^2$
    in~$\overline\Om~(\sigma_{e,0}>0)$ and $(\sigma_i(x)\xi,\xi)\geq \sigma_{i,0}|\xi|^2$
    in~$\overline\Om~(\sigma_{i,0}>0)$. In this way the propagation of the
    electrical signal through the cardiac tissue is described by the following parabolic system:
\begin{equation}\label{eq:bidomain}
    \left\{
        \begin{array}{lcl}
        a_m(c_mv_t+\mathcal{I}_{ion})-\nabla\cdot[\sigma_i(x)\nabla u_i]=\mathcal{I}_{s,i}
        &\mbox{in}&    \Omega \times \mathbb{R}_{>0},         \\
        \noalign{\smallskip}\dis
        a_m(c_mv_t+\mathcal{I}_{ion})+\nabla\cdot[\sigma_e(x)\nabla u_e]=-\mathcal{I}_{s,e}
        &\mbox{in}&    \Omega \times \mathbb{R}_{>0},         \\
        \noalign{\smallskip}\dis
        [\sigma_i(x)\nabla u_i]\cdot \nu = 0
        &\mbox{on}&    \partial \Omega \times \mathbb{R}_{>0},    \\
        \noalign{\smallskip}\dis
        [\sigma_e(x)\nabla u_e]\cdot \nu = 0
        &\mbox{on}&    \partial \Omega \times \mathbb{R}_{>0},    \\
        \noalign{\smallskip}\dis
        v(\cdot,0) =v_0
        &\mbox{in}&    \Omega,
        \end{array}
    \right.
\end{equation}
    where $c_m>0$ is the {\it capacitance of the cell membrane}, $a_m>0$ is the homogenized {\it surface-to-volume ratio of the cell membrane},
    $\mathcal{I}_{ion}$ is the {\it ionic current across the membrane}, $\mathcal{I}_{s,i}$ and $\mathcal{I}_{s,e}$ model the {\it intracellular} and
    {\it extracellular stimulation currents} used to trigger the action potential of the cell.
    The boundary conditions imply that the heart is surrounded by a non-conductive medium, and thus we require
    that the normal component of both the intracellular and extracellular current to be zero.
   The signs $+$ and $-$ reflect the change in the current density in extracellular and intracellular regions with the current flow across the membrane.

    The ionic current term $\mathcal{I}_{ion}$ in \eqref{eq:bidomain} for a given ion species is a function of the
    transmembrane voltage, the equilibrium potential of the ion species and additional cellular state variables $\wvec$  (ionic concentrations and gating variables).
    Let us explain briefly the appearance of these state variables: although the cell membrane itself is impermeable to ions, it has embedded in it a number of
    large proteins that form channels through the membrane where the ions can pass. Some transport proteins form pumps and
    exchangers, which are important for maintaining the correct ionic concentrations in the cells. Both pumps and exchangers
    have the ability to transport ions in the opposite direction of the flow generated by concentration gradients and electrical
    fields.
    In addition to the pumps and exchangers, certain proteins form channels in the membrane,
    through which ions may flow passively along the direction of the electrochemical gradient
    which is a function of transmembrane voltage and ion concentrations.
    These channels are extremely important for the behavior of excitable cells
    because most of the channels are highly selective regarding which ions are allowed to pass.
    This property of the channels is essential for generating and maintaining the potential difference across the membrane.
    The channels, so-called {\it gating channels}, also have the ability to open and close
    in response to changes in the transmembrane voltage or the presence of ligand molecules,
    and this ability is essential for the signal propagation in excitable tissue.
    Together with the constitutive equations for the cellular state variables $\wvec$ system
    \eqref{eq:bidomain} can be rewritten as:
\begin{equation}\label{eq:bidomain_ion}
    \left\{
        \begin{array}{lcl}
a_m(c_mv_t+\mathcal{I}_{ion}(v,\wvec))-\nabla\cdot(\sigma_i(x)\nabla u_i)=\mathcal{I}_{s,i}
        &\mbox{in}&    \Omega \times \mathbb{R}_{>0},         \\
        \noalign{\smallskip}\dis
a_m(c_mv_t+\mathcal{I}_{ion}(v,\wvec))+\nabla\cdot(\sigma_e(x)\nabla u_e)=-\mathcal{I}_{s,e}
        &\mbox{in}&    \Omega \times \mathbb{R}_{>0},         \\
        \noalign{\smallskip}\dis
        \wvec_t   +\gvec(v,\wvec)= \0vec
        &\mbox{in}&    \Omega \times \mathbb{R}_{>0},         \\
        \noalign{\smallskip}\dis
        (\sigma_i(x)\nabla u_i)\cdot \nu = 0
        &\mbox{on}&    \partial \Omega \times \mathbb{R}_{>0},    \\
        \noalign{\smallskip}\dis
        (\sigma_e(x)\nabla u_e)\cdot \nu = 0
        &\mbox{on}&    \partial \Omega \times \mathbb{R}_{>0},    \\
        \noalign{\smallskip}\dis
        v(\cdot,0) =v_0,\quad \wvec(\cdot,0) =\wvec_0
        &\mbox{in}&    \Omega,
        \end{array}
    \right.
\end{equation}
    where  $\gvec(v,\wvec)$ is a vector function that depends on the
    electrophysiological behavior of the heart cells.
    For simplicity, let us consider that we have only one celullar state variable $w$ and therefore a scalar function $g$.

    Some typical models for the ionic current include the FitzHugh-Nagumo model (see \cite{FN})
\begin{equation}\label{eq:FN}
    \mathcal{I}_{ion}(v,w)=-[bv(v-a)(1-v)- c\,w]\quad \hbox{and}\quad g(v,w)=-\gamma(v-\beta w)
\end{equation}
    as well as the Rogers-McCulloch model (see \cite{RM})
\begin{equation}\label{eq:RM}
    \mathcal{I}_{ion}(v,w)=-[bv(v-a)(1-v)- c\,vw] \quad \hbox{and}\quad g(v,w)=-\gamma(v-\beta w)
\end{equation}
    where $a,\,b,\,c,\,\gamma,\,\beta$ are positive ``membrane" parameters that define the shape of the action
    potential pulse.
 	 For additional discussion of related physiological models leading to systems comparable to \eqref{eq:bidomain_ion}
	we refer to \cite{keener} and for one dimensional models \cite{Peskoff1979,plonsey}.

    Since the bidomain model for the electrical activity in the heart which is difficult to solve and analyze,
    by making an assumption on the conductivity tensors $\sigma_i$ and $\sigma_e$, it is possible to simplify
    the model. Precisely, if we assume {\it equal anisotropy rates}, i.e. $\sigma_e = \mu \sigma_i$,
    where $\mu$ is a constant scalar, then $\sigma_i$ can be ``eliminated" from \eqref{eq:bidomain_ion},
    resulting in
\begin{equation}\label{eq:monodomain}
    \left\{
        \begin{array}{lcl}
        a_m(c_mv_t+\mathcal{I}_{ion}(v,w))
        -{1\over1+\mu}\nabla\cdot(\sigma_e(x)\nabla v)
        ={1\over 1+\mu}(\mu\,\mathcal{I}_{s,i}- \mathcal{I}_{s,e})
        &\mbox{in}&    \Omega \times \mathbb{R}_{>0},         \\
        \noalign{\smallskip}\dis
        w_t   +g(v,w)= 0
        &\mbox{in}&    \Omega \times \mathbb{R}_{>0},         \\
        \noalign{\smallskip}\dis
        (\sigma_e(x)\nabla v)\cdot \nu = 0
        &\mbox{on}&    \partial \Omega \times \mathbb{R}_{>0},    \\
        \noalign{\smallskip}\dis
        v(\cdot,0) =v_0,\quad w(\cdot,0) =w_0
        &\mbox{in}&    \Omega,
        \end{array}
    \right.
\end{equation}
    this particular reduction of the bidomain model is the so-called {\it monodomain model}.

    In this paper we shall study controllability properties for a simplified 1D nonlinear monodomain version
    of \eqref{eq:monodomain}. From now on, let us consider $\mathcal{I}_{ion}$ and $g$ to be given by \eqref{eq:RM}
    (in fact, the results presented in this paper holds for both FitzHugh-Nagumo and Rogers-McCulloch models).
    Assuming that $L>0$ be a positive length and $T>0$ a positive time,
    system \eqref{eq:monodomain} takes the form:
    \begin{equation}\label{eq:controlmonodomain}
    \left\{
        \begin{array}{lcl}
        a_m(c_mv_t+\mathcal{I}_{ion}(v,w))
        -{1\over1+\mu}(\sigma_e(x) v_x)_x
        ={\mu\over 1+\mu}\,\mathcal{I}_{s,i} -{1\over 1+\mu}\mathcal{I}_{s,e}
        &\mbox{in}&(0,L) \times (0,T),    \\
        \noalign{\smallskip}\dis
        w_t   +g(v,w)= 0
        &\mbox{in}&(0,L) \times (0,T),    \\
        \noalign{\smallskip}\dis
        \sigma_e(x)v_x\big|_{x=0}=\sigma_e(x)v_x\big|_{x=L} = 0
        &\mbox{in}&(0,T),            \\
        \noalign{\smallskip}\dis
        v(\cdot,0) =v_0,\quad w(\cdot,0) =w_0
        &\mbox{in}&(0,L).
        \end{array}
    \right.
\end{equation}

    The aim is to prove that we can steer the {\it transmembrane voltage-state variable} pair
 to a desired state (the final datum of a given trajectory),
    with the help of an extracellular stimulation $\mathcal{I}_{s,e}$, acting as control in moving subset $\om$ of $(0,L)$.
      This idea of a moving control domain to guarantee controllability has been used for many different problems in the past few years. See \cite{martin_rosier_moving} for
    the pioneer work. Here we use the approach introduced in \cite{CSZR}, relying on Carleman inequalities, which allows
    us to treat problems posed on bounded domains.
    The result will give the exact controllability to the trajectories for the monodomain model \eqref{eq:controlmonodomain} as long as
    the control domain $\omega$ moves in an appropriate manner and covers the whole
    domain $(0,L)$ (in Theorem \ref{teo1} we specify the kind of control domains
    and the regularity of the target trajectory). In practice a moving controller may be unrealistic. Rather one can think of an array of localized controllers which are activated consecutively and thus span a large part of the domain. Without loss of generality we can suppose that $c_m=\mu=1$ and $a_m={1\over2}$.
    We will deal with the controllability for the following  system
\begin{equation}\label{eq:monoT}
    \left\{
        \begin{array}{lcl}
        v_t+\mathcal{I}_{ion}(v,w)
        -(\sigma_i(x) v_x)_x=\mathcal{I}_{s,i}-\mathcal{I}_{s,e} &\mbox{in}&(0,L) \times (0,T),    \\
        \noalign{\smallskip}\dis
        w_t   +g(v,w)= 0 &\mbox{in}&(0,L) \times (0,T),    \\
        \noalign{\smallskip}\dis
        \sigma_i(x)v_x\big|_{x=0}=\sigma_i(x)v_x\big|_{x=L} = 0 &\mbox{on}&(0,T),            \\
        \noalign{\smallskip}\dis
        v(\cdot,0) =v_0,\quad w(\cdot,0) =w_0 &\mbox{in}&(0,L).
        \end{array}
    \right.
\end{equation}

	We recall that for every $(v_0,w_0)\in L^2(0,L)\times L^2(0,L)$ and every
	$\mathcal{I}_{s,i},\mathcal{I}_{s,e}\in L^2((0,L)\times(0,T))$,
	there exists a unique variational solution $(v,w)$ to~\eqref{eq:monodomain} 
	that satisfies (among other things)
\[
		v\in L^2(0,T;H^1(0,L))\cap C^0([0,T];L^2(0,L))\quad \hbox{and}\quad w\in H^1(0,T;L^2(0,L)),
\]
	see for instance \cite[Theorem $30$]{BOURGAULT}. In particular the Neumann boundary condition is satisfied in the  variational sense.


  	Let us now fix a {\it trajectory} $(\bar{v},\bar{w})$, that is,
    a sufficiently regular solution to the related uncontrolled system\,
\begin{equation}\label{eq:monoTraj}
    \left\{
        \begin{array}{lcl}
        \bar v_t+\mathcal{I}_{ion}(\bar v,\bar w)
        -(\sigma_i(x)\bar  v_x)_x
        =\mathcal{I}_{s,i}            &\mbox{in}&(0,L) \times (0,T),    \\
        \noalign{\smallskip}\dis
        \bar w_t   +g(\bar v,\bar w)= 0         &\mbox{in}&(0,L) \times (0,T),    \\
        \noalign{\smallskip}\dis
        \sigma_i(x)\bar v_x\big|_{x=0}=\sigma_i(x)\bar v_x\big|_{x=L} = 0    &\mbox{on}&(0,T),            \\
        \noalign{\smallskip}\dis
        \bar v(\cdot,0) =\bar v_0,\quad \bar w(\cdot,0) =\bar w_0           &\mbox{in}&(0,L),
        \end{array}
    \right.
\end{equation}
    with $(\bar v_0,\bar w_0)\in L^2(0,L)\times L^2(0,L)$.

	The control domain $\om:[0,T]\rightarrow 2^{(0,L)}$ is required  to contain  a subset $\om_0:[0,T]\rightarrow 2^{(0,L)}$ which satisfies the following
	geometric requirements. Here $2^{(0,L)}$ stands for the set of all subsets of $(0, L)$.


\begin{Assumption}\label{AssumptionMC}
      There exist  two times $t_1,t_2$ with $0< t_1 < t_2 < T $ such that:
\begin{enumerate}
    \item[a)]  $\om_0(t)\neq(0,L)$, for all $t\in(0,T)$;
    \item[b)]  $\bigcup\limits_{t\in(0,T)}\om_0(t)=(0,L)$;
    \item[c)] $(0,L)\setminus\om_0(t)$ is nonempty and connected in $(0,L)$ for any $t\in (0,t_1]\cup [t_2,T)$;
    \item[d)]$(0,L)\setminus \om_0(t)$ has two nonempty connected components in $(0,L)$ for any  $t\in (t_1,t_2)$;
\end{enumerate}
\end{Assumption}

	Recall the definitions of some usual spaces in the context of parabolic equations with boundary conditions of Neumann type $$
	H^2_\nu(0,L):=\{u\in H^2(0,L): {\sigma_i(x)}u_x\big|_{x=0}={\sigma_i(x)}u_x\big|_{x=L}=0\}~~\hbox{and}~~ H^3_\nu(0,L):=H^3(0,L)\cap H^2_\nu(0,L).
$$

	The main result of this paper is the following:
\begin{thm}\label{teo1}
    Assume that $\sigma_i\in C^1(\overline\Om;\mathbb{R})$, $T>0$, $\mathcal{I}_{s,i}\in H^1(0,T;H^1(0,L))$, $(\bar v,\bar w)$ satisfies \eqref{eq:monoTraj} with
    $(\bar v_0,\bar w_0)\in H^{3}_\nu(0,L)\times H^{2}_\nu(0,L)$
    and that the control domain $\om:[0,T]\rightarrow 2^{(0,L)}$ contains a subset 
    satisfying Assumption \ref{AssumptionMC} above.
    Then there exists $\delta>0$ such that whenever $(v_0,w_0)\in { L^2(0,L)}\times H^{2}_\nu(0,L)$ and
\[
    \|(v_0, w_0)-(\bar v_0,\bar w_0)\|_{L^{2}(0,L)\times H^{2}_\nu(0,L)}\leq\delta,
\]
    we can find a control $\mathcal{I}_{s,e}\in L^2((0,L)\times(0,T))$,
    with $\textnormal{supp}\,\mathcal{I}_{s,e}(\cdot,t)\subset\om(t)\,~\forall t\in(0,T)$, such that the associated state $(v,w)$, solution of system \eqref{eq:monoT}, satisfies
   \begin{equation}\label{exact_condition}
     (v,w)(\cdot,T) =(\bar v,\bar w)(\cdot, T) \quad\textit{in}\quad(0,L).
\end{equation}
\end{thm}

        The property which is established in Theorem \ref{teo1} is called {\it local exact controllability to trajectories}.
        Its proof will be given at the end of Section \ref{Section:monodomain}. It relies, in part,  on arguments and results from  \cite{CSZR} and \cite{CSSOUZA} which need to be changed to
    account for the PDE-ODE coupling and to  Neumann boundary conditions.  Thus let us set $v=\bar v +y$, $w=\bar{w}+z$
    and let us use these identities in \eqref{eq:monoT}. Taking into account that
    $(\bar{v},\bar{w})$ solves \eqref{eq:monoTraj}, we find:
\begin{equation}\label{eq:monoNull}
    \!\!\!\left\{
        \begin{array}{lcl}
        y_t-(\sigma_i(x) y_x)_x+\mathcal{I}_{ion}(y,z)+3b\bar v y^2+\ell_y(\bar v,\bar w)y+ c\bar v z
        =-\mathcal{I}_{s,e}           &\mbox{in}&(0,L) \times (0,T),    \\
        \noalign{\smallskip}\dis
        z_t   +g(y,z)= 0 &\mbox{in}&(0,L) \times (0,T),    \\
        \noalign{\smallskip}\dis
        \sigma_i(x)y_x\big|_{x=0}=\sigma_i(x)y_x\big|_{x=L} = 0 &\mbox{on}&(0,T),            \\
        \noalign{\smallskip}\dis
        y(\cdot,0) =y_0,\quad z(\cdot,0) =z_0 &\mbox{in}&(0,L),
        \end{array}
    \right.
\end{equation}
    where $\ell_y(\bar v,\bar w):=3b\bar v^2-2b(1+a)\bar v+c\bar w$
    is the coefficient of the linear term in $y$ and $y_0:=v_0-\bar v_0$ and
    $z_0:=w_0-\bar w_0$ are the initial data.

    In this way the local exact controllability to the trajectories for system \eqref{eq:monoT} is reduced to a local null
    controllability problem for the solution $(y,z)$ to the nonlinear problem \eqref{eq:monoNull}.

    Notice that if we introduce the variables $p=\gamma e^{\gamma\beta t}y$ and $q= e^{\gamma\beta t}z$,
    then null controllability for system \eqref{eq:monoNull} is equivalent to null controllability for the  system
\begin{equation}\label{eq:monoNull1}
    \!\!\!\!\!\left\{
        \begin{array}{lcl}
        p_t-(\sigma_i(x) p_x)_x +\ell_p(\bar v,\bar w)p+ \ell_q(\bar v,\bar w) q+\mathcal{N}(p,q)
         = h1_{\omega}         &  \mbox{in}&    (0,L) \times (0,T),    \\
            \noalign{\smallskip}\dis
            q_t  =   p                    &  \mbox{in}& (0,L) \times (0,T),        \\
            \noalign{\smallskip}\dis
            \sigma_i(x)p_x\big|_{x=0}=\sigma_i(x)p_x\big|_{x=L} = 0            & \mbox{on}& (0,T),    \\
            \noalign{\smallskip}\dis
            p(\cdot,0) =p_0,\quad q(\cdot,0) =q_0         & \mbox{in}&   (0,L),
        \end{array}
    \right.
\end{equation}
    where $\mathcal{N}(p,q):=b\gamma^{-1}e^{-\gamma\beta t}[3\bar v-(1+a)] p^2+ce^{-\gamma\beta t}pq
    + b\gamma^{-2}e^{-2\gamma\beta t}\,p^3$ is the nonlinear term, and
    $\ell_p(\bar v,\bar w):=\ell_y(\bar v,\bar w)-\gamma\beta+ab$ and $\ell_q(\bar v,\bar w):=\gamma c\bar v$
    are the coefficients of the linear terms in $p$ and $q$, respectively, and $p_0:=\gamma y_0$ and $q_0:=z_0$ are the initial data,
    and $ h:=-\gamma e^{\gamma\beta t}\mathcal{I}_{s,e}$ is the control and $1_\om$ is the characteristic function of $\om$.

    In order to solve the latter, following a standard approach, we will first deduce (global) null controllability of
    a suitable linearized version, namely:
\begin{equation}\label{eq:monoNulllinear}
    \!\!\!\!\!\left\{
        \begin{array}{lcl}
        p_t-(\sigma_i(x) p_x)_x +\ell_p(\bar v,\bar w)p+ \ell_q(\bar v,\bar w) q
         = G+h1_{\omega}         &  \mbox{in}&    (0,L) \times (0,T),      \\
            \noalign{\smallskip}\dis
            q_t  =   p                    &  \mbox{in}& (0,L) \times (0,T),        \\
            \noalign{\smallskip}\dis
            \sigma_i(x)p_x\big|_{x=0}=\sigma_i(x)p_x\big|_{x=L} = 0            & \mbox{on}& (0,T),    \\
            \noalign{\smallskip}\dis
            p(\cdot,0) =p_0,\quad q(\cdot,0) =q_0         & \mbox{in}&   (0,L),
        \end{array}
    \right.
\end{equation}
    where $G$ belongs to a space of functions that decay exponentially as $t\rightarrow T^-$.

    Then, appropriate and rather classical arguments will be used to deduce the local null controllability of the nonlinear
    system \eqref{eq:monoNull1}.
     \begin{rmq}\label{rem1.1}
	In the case of the FitzHugh-Nagumo model, i.e. with $\mathcal{I}_{ion}$ chosen as in \eqref{eq:FN}, the minor difference
consists in $\ell_q =\gamma c$ , $\ell_y(\bar v,\bar w):=3b\bar
v^2-2b(1+a)\bar v+c$, and $\mathcal{N}(p,q):=b\gamma^{-1}e^{-\gamma\beta
t}[3\bar v-(1+a)] p^2+ce^{-\gamma\beta t}q
     + b\gamma^{-2}e^{-2\gamma\beta t}\,p^3$.\newline
    \noindent
    Let us note in particular that for $q_0=0$, $\ell_q=1$ and $G=0$ and setting $d(x,t)=\ell_p(\bar v, \bar w)$
system \eqref{eq:monoNulllinear} becomes
    \begin{equation}\label{eq:heatmemory}
     \left\{
         \begin{array}{lcl}
          \noalign{\smallskip}\dis
         p_t-(\sigma_i(x) p_x)_x + d(x,t)\,p + \int_0^t p(x,s)\, ds
          = h1_{\omega}         &  \mbox{in}&    (0,L) \times (0,T),      \\
             \noalign{\smallskip}\dis
             \sigma_i(x)p_x\big|_{x=0}=\sigma_i(x)p_x\big|_{x=L} = 0
        & \mbox{on}& (0,T),    \\
             \noalign{\smallskip}\dis
             p(\cdot,0) =p_0     & \mbox{in}&   (0,L).
         \end{array}
     \right.
\end{equation}
   Proposition \ref{nullcontrol} below will imply (global) null
controllability  for the heat equation with memory with Neumann boundary conditions.
   \end{rmq}

Exact controllability is a challenging topic which has received a tremendous amount of attention in the literature. We only comment on a few, closely related publications, { and in particular on the controllability  coupled PDE-ODE systems, and also the lack thereof.
We first note} that the linearized system \eqref{eq:monoNulllinear} above can be transformed into a parabolic equation with a distributed memory term, and an additional time-dependent  source term if $q_0\neq0$.
In \cite{brandao} optimal control of the monodomain equations is considered and  the question of approximate controllability of the heat equation with Dirichlet boundary conditions and   with memory term is raised as interesting open problem. In \cite{guerrero_imanu} it was verified that the heat equation with Dirichlet boundary conditions and memory terms is not exactly null controllable.   A related negative result was obtained in  \cite{breiten_kunisch} were it  was shown that the linearized monodomain system with Neumann boundary conditions is not exactly null controllable for a  range of physically relevant parameters.
{ In view of these negative results which refer to the case that the control domain is a fixed domain $\omega$ strictly contained in $\Omega$ it is natural to allow a more rich structure of the control mechanism. In particular a moving controller which during the time horizon $[0,T]$ covers all of $\Omega$. Such control mechanisms have first been introduced in \cite{martin_rosier_moving}, see also \cite{CSZR}.
In the recent publication \cite{zuazua_memory} exact null controllability of the linear heat equation with memory terms and Dirichlet boundary condition is obtained. There as well  the control domain appropriately   sweeps all the domain $\Omega$. The investigation of exact controllability for nonlinear systems with memory is pointed out as an interesting topic for further research. The authors of that paper mention that their technique is not applicable for the nonlinear case.}

	In this paper, we are going to treat different situations which lead to new difficulties compared to the previous
	works on parabolic equations with memory terms. Let us discuss these differences:
\begin{itemize}
	\item Neumann boundary condition. The main strategy consist of a Carleman inequality with suitable weights which allows to
	simultaneously treat the parabolic equation with Neumann boundary conditions and the ODE.
	The Carleman inequalities obtained in \cite{zuazua_memory} are inequalities
	for parabolic equations with Dirichlet boundary condition and cannot be applied for our situation.
	\item Nonlinearities. Here we are going to consider two kinds of (cubic) nonlinearities arising in the FitzHugh Nagumo and Rogers-McCulloch
	models which differ due to the way in which the memory term $w$ appears in the PDE equation, compare \eqref{eq:FN}
	and \eqref{eq:RM}.
	The main argument to obtain the exact controllability result for the nonlinear system relies
	on the Liusternik inverse mapping theorem.
	\item  Inhomogeneities.
	Another difficulty to verify exact null controllability arises due to the inhomogeneities  $G$ and $q_0$
	in the linear PDE-ODE system.
\end{itemize}

The paper is organized as follows.
	In Section~\ref{preliminaries_section}, we shall present Carleman inequalities with moving observations
	and regularity results for trajectories.
	Section~\ref{Section:monodomain} deals with the null controllability for linear monodomain equations and
	the local exact controllability to the trajectories for the nonlinear monodomain equations.
	Finally, in Appendix~\ref{App:mov_lapla}, we present a Carleman inequality for a parabolic
	equation with Neumann boundary conditions and moving observations.


\section{Preliminaries}\label{preliminaries_section}

    This section is devoted to introduce some appropriate Carleman inequalities with moving observations for ODE's and PDE's and
    regularity results for the trajectory solutions for \eqref{eq:monoTraj}.


\subsection{Carleman inequalities with moving observations}

    { In this section, we shall present the main assumptions on moving support of controls and observations and
    their consequences  in terms of Carleman inequalities. Many of them were introduced or  inspired by
    \cite{ALBANO,imanuvilov96,zuazua_memory,CSZR,CSSOUZA,BFCG,martin_rosier_moving}. These results will play a crucial role in order to obtain the exact controllability to trajectories for the monodomain equations.
 The control domain $\omega$ is required to contain  a subset $\omega_0$ which satisfies Assumption \ref{AssumptionMC}. It is  needed for the construction of a suitable weight function for the Carleman estimate.
\begin{figure}[h]
    \centering\includegraphics[scale=0.48]{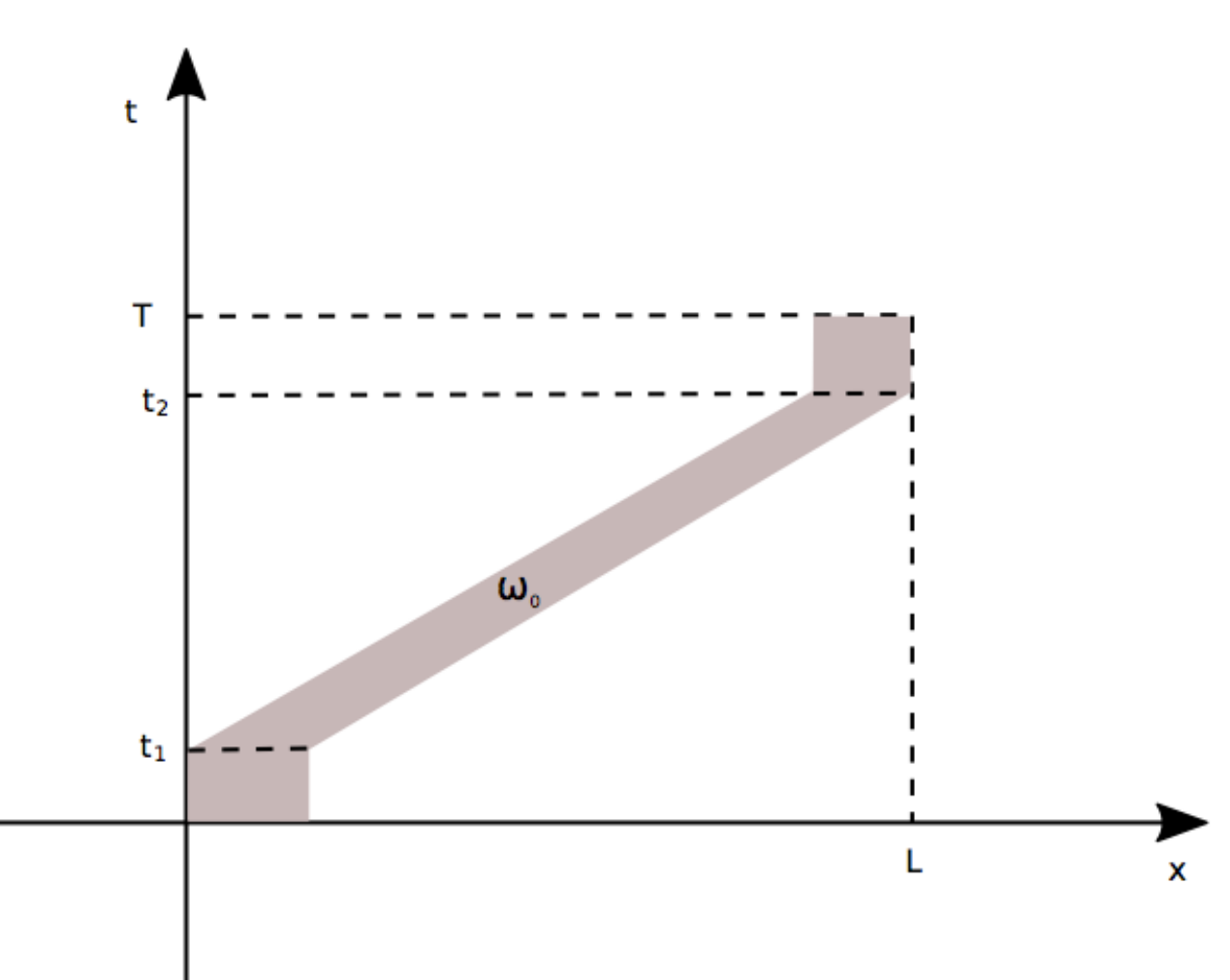}
    \caption{The control domain $\om_0$}\label{FIG1}
\end{figure}
}

	Let $\om_1:[0,T]\rightarrow 2^{(0,L)}$ be a subset of $\om$ such that
\begin{equation}\label{eq:om1om0}
	{\overline{\om}_0\subset
	\mathring{\overline{\om}}_1\quad\text{and}\quad\overline{\om}_1\subset
	\mathring{\overline{\om}}},
\end{equation}	
	where $\mathring{\overline{\om}}_1$ and $\mathring{\overline{\om}}$ denote the relative interiors with respect
	to $[0,L]\times[0,T]$ of ${\overline{\om}_1}$ and ${\overline{\om}}$, respectively.

    We are now prepared for the  construction of  a suitable weight function { which allow us to obtain Carleman inequalities
    for the parabolic equation coupled to the ordinary differential equation.}
\begin{lemma}\label{weight}
   There exist a positive number $\tau \in (0,\min\{1,T/2\})$, a positive
    constant $C_0>0$, and a function $\eta \in C^\infty ( [0,L]\times [0,T])$ such that
\ba
    \eta_x(x,t) \ne 0
    &&\forall x\in \overline{(0,L)\setminus\om_0(t)},\ \forall t\in[0,T],        \label{P1}\\
    \eta_t (x,t) \ne 0
    &&\forall x\in \overline{(0,L)\setminus\om_0(t)},\ \forall t\in[0,T],          \label{P2}\\
    \eta_t (x,t)  >0
    &&\forall x\in \overline{(0,L)\setminus\om_0(t)},\ \forall t\in[0,\tau],        \label{P3}\\
    \eta_t  (x,t) <0
    &&\forall x\in \overline{(0,L)\setminus\om_0(t)},\ \forall t\in[T-\tau,T],    \label{P4}\\
    \eta_x(0,t) \geq C_0
    &&\forall t\in [0,T ],     \label{P7}\\
    \eta_x(L,t)\leq-C_0
    &&\forall t\in [0,T ],     \label{P8}\\
    \min\limits_{(x,t)\in[0,L]\times [0,T]}\{\eta (x,t)\} =\frac{3}{4}\|\eta \|_{L^\infty([0,L]\times [0,T])}.&&
                            \label{P5}
\ea
\end{lemma}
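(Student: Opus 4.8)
\textbf{Proof proposal for Lemma \ref{weight}.}

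The plan is to build $\eta$ by the classical Fursikov--Imanuvilov device, adapted to moving observations and Neumann conditions: first produce an auxiliary function $\psi\in C^\infty([0,L]\times[0,T])$ whose spatial critical points are, at each time $t$, contained in $\om_0(t)$, and which has the right monotonicity in $t$ near the endpoints of the time interval; then set $\eta$ to be a suitable affine-plus-exponential modification of $\psi$ tuned so that the normalization \eqref{P5} holds. The heart of the construction is the spatial part, and here the geometric Requirements \ref{AssumptionMC} are exactly what is needed: at times $t\in(0,t_1]\cup[t_2,T)$ the complement $(0,L)\setminus\om_0(t)$ is a single interval, on which one can take $\psi(\cdot,t)$ to be (close to) an affine function with nonzero slope and the correct sign at $x=0$ and $x=L$, giving \eqref{P1}, \eqref{P7}, \eqref{P8}; at times $t\in(t_1,t_2)$ the complement splits into two components, on each of which $\psi(\cdot,t)$ is taken monotone, with opposite slopes so that $\psi$ still behaves like $+\,$slope near $x=0$ and $-\,$slope near $x=L$ but must develop an interior maximum --- and that maximum is parked inside $\om_0(t)$, where no condition is imposed. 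Because $\overline{\om}_0(t)$ is relatively open in the space-time sense via \eqref{eq:om1om0} and the $\om_0(t)$ sweep all of $(0,L)$ by (b), a single smooth $\psi$ on the whole cylinder can be glued together from these time-slices using a partition of unity in $t$ subordinate to the covering by $(0,t_2)$ and $(t_1,T)$; smoothness in $(x,t)$ jointly is obtained because the two local models agree on the overlap up to a smooth interpolation, and the key point is that along the whole overlap $t\in(t_1,t_2)$ the only place the interpolation could create a new spatial critical point is inside $\om_0(t)$.

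Next I would address the time-monotonicity properties \eqref{P2}--\eqref{P4}. Having fixed the spatial shape, I add to $\psi$ a function of $t$ alone, say $\theta(t)\psi_0(x) + \mu(t)$ or simply replace $\psi$ by $e^{\kappa\psi} $-type weights composed with a time factor, arranging that $\partial_t\eta$ does not vanish on $\overline{(0,L)\setminus\om_0(t)}$ for any $t$ --- this is \eqref{P2} --- and that on a short initial window $[0,\tau]$ the time-derivative is strictly positive and on a short terminal window $[T-\tau,T]$ strictly negative, which are \eqref{P3}, \eqref{P4}. Choosing $\tau\in(0,\min\{1,T/2\})$ small enough that these two windows are disjoint and do not interfere with the construction on the middle of the interval is where $\tau$ enters. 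One must check that modifying $\eta$ by a time-dependent term does not destroy \eqref{P1}, \eqref{P7}, \eqref{P8}; since those involve only $\eta_x$, adding a function of $t$ alone leaves them untouched, while composing with a strictly increasing smooth function of $\eta$ (to later enforce the $3/4$ normalization) only rescales $\eta_x$ by a positive factor, so all the sign and nonvanishing conditions are preserved --- one just has to keep the constant $C_0$ and $\tau$ uniform over $[0,T]$, which is possible by compactness.

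Finally, \eqref{P5}: replace the current $\eta$ by $\eta + K$ for a large constant $K>0$, or equivalently rescale, so that $\eta>0$ everywhere, and then the ratio $\min\eta / \|\eta\|_\infty$ can be pushed arbitrarily close to $1$; to land exactly at $3/4$ one first makes $\eta$ strictly positive, then applies $\eta\mapsto \eta + c$ and solves $\min(\eta+c)=\tfrac34\max(\eta+c)$ for $c$, which has a (unique, positive if $\min\eta/\max\eta < 3/4$) solution, possibly after first shrinking the oscillation of $\eta$ by replacing it with $\lambda\eta$ for small $\lambda>0$ so that $\min\eta/\max\eta$ starts below $3/4$. Again this last affine change does not affect \eqref{P1}--\eqref{P4}, \eqref{P7}, \eqref{P8} up to harmless positive rescaling of constants. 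The main obstacle, and the step deserving genuine care, is the first one: producing a \emph{single} $C^\infty$ function on the whole space-time cylinder whose spatial critical set stays inside the moving region $\om_0(t)$ as that region changes topology at $t_1$ and $t_2$ --- this is precisely the place where the four items of Requirements \ref{AssumptionMC} are used, and the gluing across the topological transition (one interval splitting into two) must be done so that no spurious critical point escapes $\om_0$; everything after that is bookkeeping with affine and monotone reparametrizations.
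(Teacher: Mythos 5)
Your overall plan — park the spatial critical points of the weight inside the moving set $\om_0(t)$, use items (a)--(d) of Requirements \ref{AssumptionMC} to handle the change of topology of the complement, and finish with affine renormalizations for \eqref{P5} — is indeed the spirit of the argument the paper invokes (it gives no detailed proof, only the reference to \cite[Appendix A]{CSZR} with modifications for \eqref{P7}--\eqref{P8}). However, your treatment of the time properties \eqref{P2}--\eqref{P4} has a genuine gap. You propose to obtain them by adding to the spatial profile a function of $t$ alone (or a factor $\theta(t)$). This cannot work as stated: since \eqref{P3} and \eqref{P4} force the added term's derivative to change sign between $[0,\tau]$ and $[T-\tau,T]$, it must vanish at some intermediate $t^*$; at that time the complement $\overline{(0,L)\setminus\om_0(t^*)}$ is nonempty by Requirement (a), so \eqref{P2} demands $\psi_t\neq 0$ there — a property your $\psi$, glued from essentially static spatial slices by a partition of unity in $t$, does not have (it can be time-independent on whole subintervals, making $\psi_t\equiv 0$ off $\om_0$). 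The mechanism that actually produces \eqref{P2} in the \cite{CSZR}-type construction is the motion itself: the profile is transported with $\om_0(t)$, schematically $\eta(x,t)\approx\varphi(x-c(t))$ with $c'(t)\neq 0$, so that $\eta_t=-c'(t)\eta_x$ is nonzero exactly where $\eta_x$ is, i.e.\ outside $\om_0(t)$; Requirement (b) (the sets $\om_0(t)$ sweep all of $(0,L)$) is what makes such genuine motion available, and \eqref{P3}--\eqref{P4} are then arranged by the direction/speed of the motion (or a controlled correction) near $t=0$ and $t=T$, not by a bare additive function of time. Without this coupling between $\eta_t$ and $\eta_x$, your second step collapses, and it is precisely the nontrivial part of the lemma.

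Two smaller points: a single affine slice on the connected complement cannot carry both boundary conditions \eqref{P7} and \eqref{P8}; since these must hold for \emph{all} $t$, the profile has an interior maximum inside $\om_0(t)$ at every time, not only for $t\in(t_1,t_2)$, so your description of the slices for $t\in(0,t_1]\cup[t_2,T)$ needs the same ``rise inside $\om_0(t)$, then decrease on the complement'' shape. And in the \eqref{P5} adjustment, replacing $\eta$ by $\lambda\eta$ with $\lambda>0$ does not change $\min\eta/\max\eta$, so that remark is vacuous; the affine shift alone suffices, e.g.\ with $c=3\max\eta-4\min\eta$ one gets $\min(\eta+c)=\tfrac34\max(\eta+c)$ with $\eta+c>0$, and this shift leaves \eqref{P1}--\eqref{P4}, \eqref{P7}, \eqref{P8} untouched.
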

    The proof of Lemma \ref{weight} can by obtained by similar arguments as in  \cite[Appendix $A$]{CSZR}.
    It differs with respect to properties \eqref{P7} and \eqref{P8}. { In fact, one just has to check that
    the weight obtained in \cite[Appendix $A$]{CSZR} satisfies also properties \eqref{P7} and \eqref{P8}}

    Next, we introduce a real function  $r\in C^\infty(0,T)$, symmetric with respect to $t={T\over2}$, i.e.
    $r(t)=r(T-t)$ for any $t\in(0,T)$, and such that
\begin{equation}\label{eq:r}
    r(t) = \left\{
    \begin{array}{lcl}
    \noalign{\smallskip}\dis
    {1\over t} &\text{for}& 0<t\leq{\tau\over2}, \\
    \noalign{\smallskip}\dis
    \text{\rm strictly decreasing}&\text{for}& {\tau\over2}< t <\tau,\\
    \noalign{\smallskip}\dis
    1    &\text{for}&\tau \le t \le {T\over2}.
    \end{array}
    \right.
\end{equation}

    For these choices of $\eta$ and $r$ let us define the weights
\begin{equation}\label{alpha}
    \begin{array}{lr}
    \alpha (x,t) := r(t) (e^{2\lambda \|\eta \|_\infty} - e^{\lambda\eta (x,t)})~~\hbox{and}~~
    \xi (x,t) := r(t) e^{ \lambda \eta (x,t)}&\forall(x,t)\in (0,L)\times (0,T),
\end{array}
\end{equation}
    where $\lambda >0$ is a  sufficiently large parameter that will be chosen later. We have the following technical result.

\begin{lemma}
\label{lemma:ode}
    There exist positive real numbers $\lambda _1> 0$,  $s_1> 0$ and $C_1>0$ (depending on $L$ and $\om_1$) such that for all
    $\lambda \ge \lambda _1$, all $s\ge s_1$ and all $\varphi\in H^1(0,T;L^2(0,L))$, the following
    inequality  holds
\begin{equation}\label{carleman:ode}
    s\lambda^2\iiTL \!\xi|\varphi|^2e^{-2s\alpha}\,dxdt\le C_1\left(\iiTL |\varphi_t|^2e^{-2s\alpha}\, dxdt
    + s^2\lambda ^2\int_0^T\!\!\!\!\int_{\om_1(t)}\xi^2|\varphi|^2e^{-2s\alpha}\,dxdt\right).
\end{equation}
\end{lemma}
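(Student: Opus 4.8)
The plan is to prove the pointwise-in-$x$ estimate by treating, for almost every fixed $x \in (0,L)$, the function $t \mapsto \varphi(x,t)$ as a scalar function of time and applying a one-dimensional Carleman-type inequality for the trivial ODE $\varphi_t = \varphi_t$, then integrating in $x$. The key observation is that the weight $\alpha(x,t)$ is, for each fixed $x$, a function of $t$ alone that blows up like $r(t) \sim 1/t$ as $t \to 0^+$ and, by symmetry of $r$, like $1/(T-t)$ as $t \to T^-$; and that on the complement of $\omega_0(t)$ one has $\eta_t \neq 0$ with a definite sign near the endpoints in time (properties \eqref{P2}, \eqref{P3}, \eqref{P4} of Lemma \ref{weight}). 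The mechanism is the standard one: multiply, integrate by parts in $t$, and absorb; the term $\int_{\omega_1(t)}$ on the right-hand side appears precisely because $\eta_t$ may vanish — and $\eta$ itself fails to be a good weight — on the set $\omega_0(t)$, so one localizes there with a cutoff supported in $\omega_1(t)$, which is legitimate thanks to the nesting \eqref{eq:om1om0}.

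First I would introduce $\psi := e^{-s\alpha}\varphi$, so that $\psi(x,0) = \psi(x,T) = 0$ for a.e.\ $x$ because $e^{-s\alpha} \to 0$ exponentially at $t=0$ and $t=T$ (here $\alpha > 0$ and $\alpha \to +\infty$ at both temporal endpoints). Then $\psi_t = e^{-s\alpha}(\varphi_t - s\alpha_t \varphi)$, and the identity
\[
  \|\psi_t + s\alpha_t \psi\|_{L^2(0,T)}^2
  = \|\psi_t\|_{L^2(0,T)}^2 + s^2\|\alpha_t \psi\|_{L^2(0,T)}^2
    + 2s\int_0^T \alpha_t \psi \psi_t\, dt
\]
holds for each fixed $x$. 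The cross term integrates by parts: $2s\int_0^T \alpha_t\psi\psi_t\,dt = -s\int_0^T \alpha_{tt}|\psi|^2\,dt$ (no boundary terms, since $\psi$ vanishes at $t=0,T$). Hence
\[
  s^2\int_0^T \alpha_t^2|\psi|^2\,dt \;\le\; \|\psi_t + s\alpha_t\psi\|_{L^2(0,T)}^2 + s\int_0^T |\alpha_{tt}|\,|\psi|^2\,dt
  \;\le\; \|e^{-s\alpha}\varphi_t\|_{L^2(0,T)}^2 + s\int_0^T|\alpha_{tt}|\,|\psi|^2\,dt,
\]
using $\psi_t + s\alpha_t\psi = e^{-s\alpha}\varphi_t$. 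The next step is the comparison of the weights: by the structure \eqref{alpha} one has $|\alpha_t| = |r_t(e^{2\lambda\|\eta\|_\infty} - e^{\lambda\eta}) - r\lambda\eta_t e^{\lambda\eta}|$, and on $\overline{(0,L)\setminus\omega_0(t)}$ one uses \eqref{P2}–\eqref{P4} together with the explicit behavior of $r$ to show $\alpha_t^2 \ge c\,\lambda^2 \xi$ for $\lambda$ large and a constant $c>0$ depending only on $L$ and $\eta$ (the worst case being the transition region $\tau/2 < t < \tau$, which is compact and away from the singularities, so $\xi$ is bounded there); one also bounds $|\alpha_{tt}| \le C\lambda^2 \xi^2 r$ crudely, which after multiplying by the already-present factor $s$ is lower order in $s$ and hence absorbable for $s \ge s_1$. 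On the remaining set $\omega_0(t)$ — where these weight comparisons can fail — I would estimate $\xi|\varphi|^2 e^{-2s\alpha}$ directly by $C\,\xi^2|\varphi|^2 e^{-2s\alpha}$ (since $\xi \ge c > 0$ everywhere) and note this is dominated by the observation term once a cutoff $\chi \in C_c^\infty$ with $\chi \equiv 1$ on $\omega_0(t)$ and $\supp\chi \subset \omega_1(t)$ is inserted; splitting the left-hand integral as $\int_{(0,L)\setminus\omega_0(t)} + \int_{\omega_0(t)}$ and handling each piece this way gives the claim.

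The main obstacle, and the step requiring the most care, is the weight comparison $\alpha_t^2 \ge c\lambda^2\xi$ uniformly on $\overline{(0,L)\setminus\omega_0(t)}$ for all $t$ — in particular reconciling the region $t \in (\tau, T-\tau)$ where $r_t = 0$ and $\alpha_t = -r\lambda\eta_t e^{\lambda\eta}$, which is fine by \eqref{P2}, with the singular regions $t \le \tau/2$ where $r_t = -1/t^2$ dominates and one must check that the two contributions to $\alpha_t$ do not cancel; here the sign conditions \eqref{P3}–\eqref{P4} are exactly what guarantee no cancellation near the temporal endpoints, and the transition zone is handled by compactness. The hypothesis $\varphi \in H^1(0,T;L^2(0,L))$ is what makes all integrations by parts in $t$ and the vanishing of boundary terms rigorous, via a density argument if needed. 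I do not expect any difficulty from the $x$-direction: no $x$-derivatives of $\varphi$ enter, so after establishing the inequality for a.e.\ fixed $x$ one simply integrates $dx$ over $(0,L)$ and invokes Fubini.
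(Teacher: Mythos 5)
Your overall route---conjugating via $\psi=e^{-s\alpha}\varphi$, expanding the square, integrating the cross term by parts in $t$ (legitimate since $e^{-s\alpha}$ vanishes at $t=0$ and $t=T$), bounding $|\alpha_t|$ from below off $\omega_0(t)$ using \eqref{P2}--\eqref{P4} together with the structure of $r$ in \eqref{eq:r}, and sending the $\omega_0$-portion of the left-hand side to the observation term through a cutoff supported in $\omega_1$ via \eqref{eq:om1om0}---is exactly the Albano--Tataru/Chaves-Silva--Rosier--Zuazua argument (\cite{ALBANO}, \cite{CSZR}) that the paper invokes for this lemma without writing out a proof, so in approach you coincide with it.

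There is, however, one step that fails as written: the claim that the error term $s\iint|\alpha_{tt}||\psi|^2\,dx\,dt$ is ``lower order in $s$ and hence absorbable for $s\ge s_1$.'' Absorption into $s^2\iint\alpha_t^2|\psi|^2$ is possible only where you have a lower bound on $\alpha_t^2$: outside $\omega_0(t)$, and near $t=0,T$ where for \emph{every} $x$ one has $\alpha_t^2\sim r^4e^{4\lambda\|\eta\|_\infty}$ dominating $|\alpha_{tt}|\sim r^3e^{2\lambda\|\eta\|_\infty}$ once $s^2$ is weighed against $s$. But for intermediate times, where $r_t\equiv 0$, one has $\alpha_t=-\lambda\xi\eta_t$, and on $\omega_0(t)$ the function $\eta_t$ may vanish---that is precisely why $\omega_0$ is excised in \eqref{P2}---so on that set there is nothing on the left to absorb into, while $\alpha_{tt}$ need not be small there. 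That piece of the error term must instead be dominated by the local observation term, which is immediate (there $|\alpha_{tt}|\le C(\lambda,T)\,\xi^2$, $s\le s^2$ and $\omega_0\subset\omega_1$), i.e.\ by the same cutoff mechanism you already apply to the left-hand piece over $\omega_0$; your write-up, however, applies that mechanism only to the left-hand side. Relatedly, the crude bound $|\alpha_{tt}|\le C\lambda^2\xi^2 r$ cannot hold with $C$ independent of $\lambda$: near $t=0,T$ the dominant contribution $r_{tt}\,(e^{2\lambda\|\eta\|_\infty}-e^{\lambda\eta})\sim r^3e^{2\lambda\|\eta\|_\infty}$ exceeds $\lambda^2 r^3e^{2\lambda\eta}$ wherever $\eta<\|\eta\|_\infty$ (only $\eta\ge\tfrac34\|\eta\|_\infty$ is guaranteed by \eqref{P5}); you need a $\lambda$-dependent constant, as in the paper's own estimates \eqref{eq:alpha_esti}, and correspondingly a threshold $s_1$ depending on $\lambda$, consistent with how the paper states Lemma \ref{lemma:elliptic} and argues in its Appendix. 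With these two repairs your argument goes through.
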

 The  proof of this result is obtained by similar arguments as in \cite[Appendix $C$]{CSZR} and relies on ideas from \cite{ALBANO}.

 For our purposes, we also need the following Carleman inequality for the heat equation with Neumann boundary conditions:
\begin{lemma}
\label{lemma:elliptic}
   There exist constants  $\lambda _2>0$,  $s_2>0$ and
    $C_2>0$ (depending on $L$ and $\om_1$) such that  for any $\lambda \ge \lambda _2$, any
    $s\ge s_2(\lambda)$, and any terminal datum $\psi_T\in L^2(0,L)$, and any source term $f\in L^2((0,L)\times(0,T))$, the unique weak solution for
\[
    \left \{
        \begin{array}{lcl}
            -\psi_t-(\sigma_i(x) \psi_x)_x =f &\mbox{in}&    (0,L)\times(0,T),\\
            \noalign{\smallskip}\dis
\sigma_i(x)\psi_x\big|_{x=0}=\sigma_i(x)\psi_x\big|_{x=L} =0 &\mbox{in}&    (0,T),\\
            \noalign{\smallskip}\dis
            \psi(T) = \psi_T &\mbox{in}&    (0,L),
        \end{array}
    \right.
\]
    satisfies
\begin{equation}\label{carleman:neumann}
\begin{alignedat}{2}
    &s^{-1}\!\!\iiTL \!\!\!\xi^{-1}(|\psi_{xx}|^2+|\psi_t|^2)e^{-2s\alpha}\,dxdt\\
    &+
    s\lambda^2\!\!\iiTL\!\!\! \xi|\psi_x|^2e^{-2s\alpha}\,dxdt
    +s^3\lambda ^4\!\!\iiTL\!\!\! \xi^3|\psi|^2e^{-2s\alpha}
    \,dxdt\\
    &+s^3\lambda^3\int_0^T(\xi^3 |\psi|^2e^{-2s\alpha})\big|_{x=L}
    +s^3\lambda^3\int_0^T(\xi^3|\psi|^2e^{-2s\alpha})\big|_{x=0}\\
    &\le C_2 \left(\iiTL |f|^2e^{-2s \alpha}\,dxdt
    +s^3\lambda ^4\int_0^T\!\!\!\int_{\om_1(t)}
    \xi^3|\psi|^2e^{-2s\alpha}\,dxdt\right).     \end{alignedat}
\end{equation}
\end{lemma}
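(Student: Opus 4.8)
The plan is to treat \eqref{carleman:neumann} as a version of the classical Fursikov--Imanuvilov Carleman estimate for the backward heat operator $-\partial_t-\partial_x(\sigma_i\partial_x\,\cdot)$, modified in two ways: the observation set is the \emph{moving} region $\omega_1(t)$ rather than a fixed subdomain, and the boundary conditions are of Neumann type. The moving feature is built into the weight: by \eqref{P1}--\eqref{P2} the function $\eta$ of Lemma~\ref{weight} has, for each fixed $t$, no critical point on $\overline{(0,L)\setminus\omega_0(t)}$, so it plays the role of the usual Fursikov--Imanuvilov weight there, while the space-time region where $\eta_x$ or $\eta_t$ degenerates is contained in $\omega_0\subset\omega_1$. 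The genuinely new point, relative to the Dirichlet treatment in \cite{CSZR}, is the handling of the boundary terms at $x=0$ and $x=L$, and this is where \eqref{P7}--\eqref{P8} are used; I would proceed along the following, essentially standard, lines.

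First I would conjugate, setting $\phi:=e^{-s\alpha}\psi$. Since $\alpha_x=-\lambda\eta_x\,\xi$, the condition $\sigma_i\psi_x|_{x=0,L}=0$ becomes $\phi_x=s\lambda\eta_x\,\xi\,\phi$ at $x=0,L$, whose coefficient is $\ge c\,s\lambda\xi>0$ at $x=0$ and $\le-c\,s\lambda\xi<0$ at $x=L$ by \eqref{P7}--\eqref{P8}. The function $\phi$ satisfies $M_1\phi+M_2\phi=g_s$, with $M_1$ the formally self-adjoint part (containing $-(\sigma_i\phi_x)_x-s^2\sigma_i\alpha_x^2\phi$ and the zeroth-order terms involving $\alpha_t$), $M_2$ the skew-adjoint part (containing $-\phi_t-2s\sigma_i\alpha_x\phi_x$), and $g_s=e^{-s\alpha}f$ up to terms that will be absorbed. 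Then I would write the $L^2((0,L)\times(0,T))$ identity $\|M_1\phi\|^2+\|M_2\phi\|^2+2(M_1\phi,M_2\phi)=\|g_s\|^2$ and expand the cross term by repeated integration by parts in $x$ and $t$. Choosing first $\lambda\ge\lambda_2$ and then $s\ge s_2(\lambda)$ large, the interior part of this computation is classical and produces the dominant positive quantities $s\lambda^2\iiTL\xi|\phi_x|^2\,dxdt$ and $s^3\lambda^4\iiTL\xi^3|\phi|^2\,dxdt$, up to a remainder supported in $x$ inside $\omega_0(t)$; the singular behaviour of $r$ near $t=0,T$ is accommodated by \eqref{eq:r} and the sign conditions \eqref{P3}--\eqref{P4}, and all lower-order terms are absorbed.

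The integrations by parts in $x$ also generate boundary integrals at $x=0,L$. Substituting the relation $\phi_x=s\lambda\eta_x\xi\phi$ and using \eqref{P7}--\eqref{P8}, the leading boundary contributions are $s^3\lambda^3\int_0^T(\xi^3|\phi|^2)\big|_{x=0}$ and $s^3\lambda^3\int_0^T(\xi^3|\phi|^2)\big|_{x=L}$ with the favourable sign, so they can be kept on the left-hand side --- this is precisely the origin of the two trace terms in \eqref{carleman:neumann} --- while the remaining boundary terms carry strictly fewer powers of $s\lambda$ and are dominated by them. Keeping track of every boundary contribution at $x=0,L$ and checking that all the signs work out is the main obstacle of the proof; the full computation is what Appendix~\ref{App:mov_lapla} is devoted to.

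It then remains to localize the interior remainder and to recover the second-order terms. Picking $\chi\in C^\infty([0,L]\times[0,T])$ with $\chi\equiv1$ on $\omega_0$ and $\mathrm{supp}\,\chi\subset\omega_1$ --- possible by \eqref{eq:om1om0} --- the remainder is bounded by $Cs^3\lambda^4\int_0^T\!\!\int_{\omega_1(t)}\xi^3|\phi|^2\,dxdt$ plus terms absorbable into the dominant ones, using the equation to treat the derivatives that fall on $\chi\phi$. The term $s^{-1}\iiTL\xi^{-1}(|\psi_{xx}|^2+|\psi_t|^2)e^{-2s\alpha}\,dxdt$ is then bounded by reading $\psi_{xx}$ and $\psi_t$ off the PDE, once the $\psi$- and $\psi_x$-integrals are under control. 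Finally, undoing the substitution $\phi=e^{-s\alpha}\psi$ converts every $\phi$-integral into the corresponding $\psi$-integral, at the cost of a harmless loss in the power of $\lambda$, which yields \eqref{carleman:neumann}.
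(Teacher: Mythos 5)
Your outline matches the paper's Appendix~\ref{App:mov_lapla} proof essentially step for step: conjugation $w=e^{-s\alpha}\psi$, splitting into self-adjoint and skew-adjoint parts, expansion of the cross term, retention of the boundary terms at $x=0,L$ whose favourable sign comes from \eqref{P7}--\eqref{P8}, and elimination of the local $w_x$-term by a cutoff equal to $1$ on $\omega_0$ and supported in $\omega_1$. The only (minor) deviation is at the end: the paper recovers the weighted $|\psi_{xx}|^2$ and $|\psi_t|^2$ terms from the retained squares $\|P_e w\|^2$ and $\|P_k w\|^2$ rather than ``by reading them off the PDE'', which as literally stated would be circular since the equation only controls the combination $\psi_t+(\sigma_i\psi_x)_x$, though your version can also be repaired by a weighted energy argument.
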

    We will give a proof for this result in Appendix \ref{App:mov_lapla}.


\subsection{Regular trajectories}

    In this section let us present regularity results for the uncontrolled solutions to \eqref{eq:monoTraj}.
    We have the following result:
\begin{propo}\label{propo:regular_trajec}
    Let $\mathcal{I}_{s,i}\in L^2(0,T;L^2(0,L))$ and $(\bar v_0,\bar w_0)\in H^1(0,L)\times H^2_\nu(0,L)$. Then, \eqref{eq:monoTraj}
    possesses exactly one solution $(\bar v,\bar w)$, with
\begin{equation}\label{reg_traj}
\left\{
    \begin{alignedat}{2}
        &\bar v\in L^2(0,T;H^2_\nu(0,L))\cap W^{1,2}(0,T;L^2(0,L))\\
        &\bar w\in  W^{1,2}(0,T;H^2_\nu(0,L))\cap W^{2,2}(0,T;L^2(0,L)).
    \end{alignedat}
    \right.
\end{equation}
\end{propo}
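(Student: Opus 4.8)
The plan is a bootstrap argument that starts from the basic well-posedness quoted just before the statement. By \cite[Theorem $1.1$]{brandao} (alternatively, a Galerkin construction), for $\mathcal{I}_{s,i}\in L^2(0,T;L^2(0,L))$ and $(\bar v_0,\bar w_0)\in H^1(0,L)\times L^2(0,L)$ there is a unique solution $(\bar v,\bar w)$ of \eqref{eq:monoTraj} with $\bar v\in C^0([0,T];H^1(0,L))$ and $\bar w\in C^0([0,T];L^2(0,L))$; uniqueness in the class \eqref{reg_traj} then follows a fortiori, so only the regularity gain has to be proved. I will use the one-dimensional embedding $H^1(0,L)\hookrightarrow L^\infty(0,L)$ repeatedly; in particular it gives $\bar v\in L^\infty((0,L)\times(0,T))$.

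\emph{Step 1 (regularity of $\bar v$).} Rewrite the first equation of \eqref{eq:monoTraj} as $\bar v_t-(\sigma_i(x)\bar v_x)_x=F$, where $F:=\mathcal{I}_{s,i}-\mathcal{I}_{ion}(\bar v,\bar w)=\mathcal{I}_{s,i}-b\bar v(a-\bar v)(1-\bar v)-c\,\bar v\bar w$. The cubic-polynomial part lies in $L^\infty((0,L)\times(0,T))$, the product $\bar v\bar w$ lies in $C^0([0,T];L^2(0,L))$ because $\bar v\in L^\infty$ and $\bar w\in C^0([0,T];L^2(0,L))$, and $\mathcal{I}_{s,i}\in L^2(0,T;L^2(0,L))$; hence $F\in L^2(0,T;L^2(0,L))$. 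Maximal $L^2$-regularity for the self-adjoint, uniformly elliptic operator $u\mapsto-(\sigma_i(x)u_x)_x$ with homogeneous Neumann boundary conditions (whose square-root domain is $H^1(0,L)$, the trace space $[L^2(0,L),H^2_\nu(0,L)]_{1/2}$ of $L^2(0,T;H^2_\nu)\cap W^{1,2}(0,T;L^2)$), combined with $\bar v_0\in H^1(0,L)$, yields $\bar v\in L^2(0,T;H^2_\nu(0,L))\cap W^{1,2}(0,T;L^2(0,L))$; this is the first line of \eqref{reg_traj} (and, by interpolation, it recovers $\bar v\in C^0([0,T];H^1(0,L))$).

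\emph{Step 2 (regularity of $\bar w$).} Solving the linear ODE $\bar w_t=\gamma(\bar v-\beta\bar w)$ pointwise in $x$ gives the Duhamel representation $\bar w(t)=e^{-\gamma\beta t}\bar w_0+\gamma\int_0^t e^{-\gamma\beta(t-s)}\bar v(s)\,ds$. Since $\bar w_0\in H^2_\nu(0,L)$ and, by Step 1, $\bar v\in L^2(0,T;H^2_\nu(0,L))$, the integral term belongs to $W^{1,2}(0,T;H^2_\nu(0,L))$: integration in $s$ preserves membership in $H^2(0,L)$, and it preserves the homogeneous Neumann condition because $\bar v_x(0,\cdot)=\bar v_x(L,\cdot)=0$ a.e.\ in $(0,T)$ (here $\sigma_i(0),\sigma_i(L)>0$ are used). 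Hence $\bar w\in W^{1,2}(0,T;H^2_\nu(0,L))$. Differentiating the ODE once more, $\bar w_{tt}=\gamma\bar v_t-\gamma\beta\bar w_t=\gamma\bar v_t-\gamma^2\beta(\bar v-\beta\bar w)$, and the right-hand side lies in $L^2(0,T;L^2(0,L))$ by Step 1; therefore $\bar w\in W^{2,2}(0,T;L^2(0,L))$, completing \eqref{reg_traj}. For the FitzHugh--Nagumo nonlinearity \eqref{eq:FN} the argument is identical, the only change being that $c\,\bar v\bar w$ is replaced by $c\,\bar w$ in $F$.

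The only genuinely delicate point is the order in which the bootstrap is run: Step 1 needs only the base regularity $\bar w\in C^0([0,T];L^2(0,L))$, whereas the $H^2_\nu$-gain for $\bar w$ in Step 2 uses the $L^2(0,T;H^2_\nu)$-bound for $\bar v$ produced in Step 1, so there is no circularity. The remaining work — controlling the cubic term through $H^1\hookrightarrow L^\infty$ and checking that time integration preserves both the $H^2$-regularity and the Neumann condition for $\bar w$ — is routine.
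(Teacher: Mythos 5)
Your argument is correct, but it is not the route the paper takes: the paper's proof of Proposition~\ref{propo:regular_trajec} is a one-line deferral --- adapt the proof of \cite[Theorem 1.1]{brandao}, i.e.\ rerun the nonlinear existence argument of Brand\~ao et al.\ with Neumann instead of Dirichlet boundary conditions and with $\bar w_0\not\equiv 0$ --- so the regularity \eqref{reg_traj} is obtained directly from the (Galerkin-type) construction of the strong solution. You instead take the weak well-posedness $(\bar v,\bar w)\in C^0([0,T];H^1(0,L)\times L^2(0,L))$ quoted in the introduction as a starting point and bootstrap: the one-dimensional embedding $H^1(0,L)\hookrightarrow L^\infty(0,L)$ puts $F=\mathcal{I}_{s,i}-\mathcal{I}_{ion}(\bar v,\bar w)$ in $L^2(0,T;L^2(0,L))$, maximal $L^2$-regularity for the Neumann realization of $u\mapsto -(\sigma_i u_x)_x$ together with $\bar v_0\in H^1(0,L)=D(A^{1/2})$ gives the first line of \eqref{reg_traj}, and the explicit Duhamel formula for the linear ODE transfers this to $\bar w$, the $H^2_\nu$-constraint being preserved under time integration because $\sigma_i\geq\sigma_{i,0}>0$ forces $\bar v_x=0$ at $x=0,L$ for a.e.\ $t$. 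This is in fact closer in spirit to the paper's own proof of Proposition~\ref{propo:regular_trajec2}, which bootstraps the higher regularity using the improved-regularity theorem from \cite{evans}. What your version buys is a self-contained account of exactly where the Neumann condition and the nonzero $\bar w_0$ enter; what it costs is the reliance on the quoted base well-posedness (itself only referenced to a Dirichlet result, a caveat you share with the paper) and one routine step you leave implicit and should state: the given weak solution must be identified with the maximal-regularity solution of the linear problem with data $(F,\bar v_0)$, which follows from uniqueness of weak solutions to that linear problem.
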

    For the proof, we just have to adapt the proof of \cite[Theorem $1.1$]{brandao}, taking into account that
    $\bar w_0\not\equiv0$ and the boundary conditions are of Neumann kind instead of Dirichlet. In this way, we can also obtain the following result.
\begin{propo}\label{propo:regular_trajec2}
    Let $\mathcal{I}_{s,i}\in H^1(0,T;H^1(0,L))$ and $(\bar v_0,\bar w_0)\in H^3_\nu(0,L)\times H^2_\nu(0,L)$. Then, \eqref{eq:monoTraj}
    possesses exactly one solution $(\bar v,\bar w)$, with
    \[
\left\{
    \begin{alignedat}{2}
         &\bar v\in L^\infty(0,T; H^3_\nu(0,L))\cap W^{1,2}(0,T;H^2_\nu(0,L))\cap W^{2,2}(0,T;L^2(0,L))\\
        &\bar w\in  W^{2,2}(0,T;H^2_\nu(0,L))\cap W^{3,2}(0,T;L^2(0,L)).
    \end{alignedat}
    \right.
\]
  \end{propo}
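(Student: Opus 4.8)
The plan is to bootstrap from Proposition~\ref{propo:regular_trajec}, exploiting the PDE--ODE structure of \eqref{eq:monoTraj}, time-differentiation, and the fact that in one space dimension $H^1(0,L)$ and $H^2(0,L)$ are Banach algebras, continuously embedded in $C([0,L])$. From Proposition~\ref{propo:regular_trajec} together with the interpolation embeddings $L^2(0,T;H^2)\cap H^1(0,T;L^2)\hookrightarrow C([0,T];H^1)$ and $H^1(0,T;H^2)\hookrightarrow C([0,T];H^2)$, and from $\mathcal{I}_{s,i}\in H^1(0,T;H^1(0,L))\hookrightarrow C([0,T];H^1(0,L))$, all the nonlinear expressions $\mathcal{I}_{ion}(\bar v,\bar w)$, $\partial_v\mathcal{I}_{ion}(\bar v,\bar w)$, $\partial_w\mathcal{I}_{ion}(\bar v,\bar w)=c\bar v$ and $g(\bar v,\bar w)=-\gamma(\bar v-\beta\bar w)$ are polynomials of functions lying in these algebras, hence inherit their regularity.

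First I would differentiate the system in time (made rigorous via difference quotients or a Galerkin scheme). Writing $\tilde v:=\bar v_t$, $\tilde w:=\bar w_t$, the pair formally solves $\tilde v_t-(\sigma_i(x)\tilde v_x)_x+\partial_v\mathcal{I}_{ion}(\bar v,\bar w)\tilde v+\partial_w\mathcal{I}_{ion}(\bar v,\bar w)\tilde w=\mathcal{I}_{s,i,t}$ and $\tilde w_t=\gamma\tilde v-\gamma\beta\tilde w$, with Neumann conditions $\sigma_i\tilde v_x|_{x=0}=\sigma_i\tilde v_x|_{x=L}=0$ (differentiate the stationary boundary condition in $t$) and initial data
\[
\tilde v(\cdot,0)=\mathcal{I}_{s,i}(\cdot,0)+(\sigma_i(x)\bar v_{0,x})_x-\mathcal{I}_{ion}(\bar v_0,\bar w_0)\in H^1(0,L),\qquad \tilde w(\cdot,0)=\gamma(\bar v_0-\beta\bar w_0)\in H^2_\nu(0,L),
\]
where $\tilde v(\cdot,0)\in H^1(0,L)$ uses precisely $\bar v_0\in H^3_\nu(0,L)$, $\bar w_0\in H^2_\nu(0,L)$, $\mathcal{I}_{s,i}(\cdot,0)\in H^1(0,L)$ and the algebra property. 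Since the right-hand side $\mathcal{I}_{s,i,t}-\partial_v\mathcal{I}_{ion}(\bar v,\bar w)\tilde v-\partial_w\mathcal{I}_{ion}(\bar v,\bar w)\tilde w$ belongs to $L^2((0,L)\times(0,T))$ (the coefficients are in $L^\infty((0,L)\times(0,T))$ by the embeddings above, while $\tilde v=\bar v_t$ and $\tilde w=\bar w_t$ are in $L^2((0,L)\times(0,T))$ by Proposition~\ref{propo:regular_trajec}), maximal $L^2$-regularity for the heat equation with Neumann boundary conditions (no compatibility condition is required for an $H^1$ initial datum) yields $\tilde v\in L^2(0,T;H^2_\nu(0,L))\cap W^{1,2}(0,T;L^2(0,L))\cap C([0,T];H^1(0,L))$. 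This already gives $\bar v\in W^{1,2}(0,T;H^2_\nu(0,L))\cap W^{2,2}(0,T;L^2(0,L))$ and, combined with $\bar v\in L^2(0,T;H^2_\nu(0,L))$, also $\bar v\in C([0,T];H^2_\nu(0,L))$, whence $\mathcal{I}_{ion}(\bar v,\bar w)\in C([0,T];H^2(0,L))$.

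To obtain the top spatial regularity of $\bar v$, I would read the first equation of \eqref{eq:monoTraj} as an elliptic problem at (almost) every $t$,
\[
-(\sigma_i(x)\bar v_x)_x=\mathcal{I}_{s,i}-\bar v_t-\mathcal{I}_{ion}(\bar v,\bar w)=:R,
\]
whose right-hand side lies in $C([0,T];H^1(0,L))$ by the previous step. Elliptic regularity for the Neumann problem then gives $\bar v(\cdot,t)\in H^3(0,L)$ with a bound uniform in $t$, and since $\bar v(\cdot,t)\in H^2_\nu(0,L)$ we conclude $\bar v\in C([0,T];H^3_\nu(0,L))\subset L^\infty(0,T;H^3_\nu(0,L))$. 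The regularity of $\bar w$ is then purely an ODE matter: from $\bar w_t=\gamma\bar v-\gamma\beta\bar w$ with $\bar v\in C([0,T];H^3_\nu)\hookrightarrow C([0,T];H^2_\nu)$ and $\bar w\in C([0,T];H^2_\nu)$ one gets $\bar w_t\in C([0,T];H^2_\nu)$; differentiating, $\bar w_{tt}=\gamma\bar v_t-\gamma\beta\bar w_t\in L^2(0,T;H^2_\nu)$ since $\bar v_t\in L^2(0,T;H^2_\nu)$; differentiating once more, $\bar w_{ttt}=\gamma\bar v_{tt}-\gamma\beta\bar w_{tt}\in L^2(0,T;L^2)$ since $\bar v_{tt}\in L^2(0,T;L^2)$. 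This is exactly $\bar w\in W^{2,2}(0,T;H^2_\nu(0,L))\cap W^{3,2}(0,T;L^2(0,L))$.

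I expect the main obstacle to be the careful bookkeeping of the cubic nonlinearities under time-differentiation together with the verification that $\tilde v(\cdot,0)=\bar v_t(\cdot,0)$ genuinely belongs to $H^1(0,L)$ with no spurious boundary obstruction: this is where the hypotheses $\bar v_0\in H^3_\nu$, $\bar w_0\in H^2_\nu$ and $\mathcal{I}_{s,i}\in H^1(0,T;H^1(0,L))$ are exactly calibrated. One should also note that forming $(\sigma_i\bar v_{0,x})_x$ in $H^1(0,L)$ and invoking the $H^3$ elliptic estimate above tacitly require slightly more smoothness of $\sigma_i$ than the standing hypothesis $\sigma_i\in C^1$ (e.g. $\sigma_i\in C^2$ or $W^{2,\infty}$); everything else is the routine adaptation of \cite[Theorem~$1.1$]{brandao} to Neumann boundary conditions and a nontrivial $\bar w_0$ already alluded to after Proposition~\ref{propo:regular_trajec}.
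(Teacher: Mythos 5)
Your argument is correct, and it reaches the stated regularity by a genuinely different route than the paper. The paper's proof never differentiates in time explicitly: it treats $\mathcal{I}_{s,i}-\mathcal{I}_{ion}(\bar v,\bar w)$ as a source lying in $W^{1,2}(0,T;L^2(0,L))\cap L^2(0,T;H^1(0,L))$ and invokes the improved-regularity theorem of Evans (Theorem 5, p.~361) for the original equation, and then differentiates in \emph{space}: $\bar v_x$ solves a parabolic problem with homogeneous \emph{Dirichlet} data (the Neumann condition on $\bar v$ becomes $\bar v_{0,x}\in H^2(0,L)\cap H^1_0(0,L)$), to which the same theorem is applied a second time; the $L^\infty(0,T;H^3_\nu)$ bound comes from that second parabolic estimate, and $\bar w$ is then handled through the ODE exactly as you do. You instead differentiate in \emph{time}, apply maximal $L^2$-regularity to $\bar v_t$ with the $H^1$ initial datum read off from the equation at $t=0$, and recover the top spatial regularity by elliptic regularity at (almost) every fixed $t$. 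The paper's spatial-differentiation trick is specific to one dimension but lets it quote the textbook theorem verbatim (the compatibility and trace issues for $\bar v_t$ at $t=0$ are hidden inside the cited result); your route is dimension-robust and makes transparent exactly where the hypotheses $\bar v_0\in H^3_\nu$, $\bar w_0\in H^2_\nu$, $\mathcal{I}_{s,i}\in H^1(0,T;H^1)$ are consumed, at the price of having to justify the time-differentiation rigorously (difference quotients or Galerkin), which you acknowledge. Your remark that the $H^3$-level steps tacitly require $\sigma_i$ smoother than the standing $C^1$ is well taken and applies equally to the paper, whose reduction of the $\bar v_x$-equation to the right-hand side $\partial_x(\mathcal{I}_{s,i}-\mathcal{I}_{ion}(\bar v,\bar w))$ is literally valid only for constant $\sigma_i$ (consistent with the normalization $\sigma_i\equiv1$ used in its appendix).
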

\begin{proof}
    By  Proposition \ref{propo:regular_trajec} and that the spatial dimension is 1, the nonlinearity $I_{ion}(\bar v,\bar w)$
    belongs to the space $W^{1,2}(0,T;L^2(0,L))\cap L^2(0,T;H^2_\nu(0,L))$. Hence $\mathcal{I}_{s,i}-I_{ion}(\bar v,\bar w)$ belongs to $H^1(0,T;H^1(0,L))$.
    Since $\bar v_0\in H^2_\nu(0,L)$, thanks to \cite[Theorem $5$, p. $361$]{evans}, we have
\begin{equation}\label{reg_traj_2}
\left\{
    \begin{alignedat}{2}
        &\bar v\in L^2(0,T;H^3_\nu(0,L))\cap W^{1,2}(0,T;H^1(0,L))\cap W^{2,2}(0,T;(H^1(0,L))')\\
        &\bar w\in  W^{1,\infty}(0,T;H^2_\nu(0,L))\cap W^{2,2}(0,T;H^1(0,L))\cap W^{3,2}(0,T;(H^1(0,L))').
    \end{alignedat}
    \right.
\end{equation}

    Next, taking  the spatial derivative in the first equation of \eqref{eq:monoTraj}  we obtain
    a parabolic equation for $\bar v_x$ with right hand side $\partial_x(\mathcal{I}_{s,i}-I_{ion}(\bar v,\bar w))$, homogeneous Dirichlet boundary
    conditions, and initial condition $\bar v_{0,x}\in H^2(0,L)\cap H^1_0(0,L)$. By \eqref{reg_traj_2} we have that
     $\partial_x(\mathcal{I}_{s,i}-I_{ion}(\bar v,\bar w))$ belongs to $W^{1,2}(0,T;L^2(0,L))$. Thanks to \cite[Theorem $5$, p. $361$]{evans}, we have
\begin{equation}\label{reg_traj_3}
    \begin{alignedat}{2}
        &\bar v_x\in L^\infty(0,T; H^2(0,L)\cap H^1_0(0,L))\cap W^{1,2}(0,T;H^1_0(0,L))\cap W^{2,2}(0,T;H^{-1}(0,L)).
    \end{alignedat}
\end{equation}

    Finally, from \eqref{reg_traj_2} and \eqref{reg_traj_3}, we deduce that
\begin{equation}\label{reg_traj_4}
\left\{
    \begin{alignedat}{2}
         &\bar v\in L^\infty(0,T; H^3_\nu(0,L))\cap W^{1,2}(0,T;H^2_\nu(0,L))\cap W^{2,2}(0,T;L^2(0,L))\\
        &\bar w\in  W^{2,2}(0,T;H^2_\nu(0,L))\cap W^{3,2}(0,T;L^2(0,L)).
    \end{alignedat}
    \right.
\end{equation}
\end{proof}

\begin{rmq}\label{rmk:lplq}
	As a consequence of the previous result we conclude that
$$
	\ell_p(\bar v,\bar w),\ell_q(\bar v,\bar w)\in  W^{1,2}(0,T;H^2_\nu(0,L))\cap W^{2,2}(0,T;L^2(0,L))\hookrightarrow
	W^{1,\infty}(0,T;L^\infty(0,L)).
$$
	{ This is needed to have bounded coefficients for the adjoint system \eqref{adj:mono}.}
\end{rmq}


 \section{Exact controllability to  trajectories of the monodomain equations}\label{Section:monodomain}

  This section is devoted to exact controllability results for linear and nonlinear monodomain equations.


 \subsection{Controllability for the linear monodomain model}\label{Section:linearmonodomain}

    In this section, we will present a suitable Carleman inequality for a properly chosen adjoint system.
    { This will allow us  to verify the null controllability result for the linearized system  \eqref{eq:monoNulllinear} in Proposition \ref{nullcontrol} below. This result will be essential for that verification of local null controllability of nonlinear system \eqref{eq:monoNull1}.}

    The following analysis depends in a crucial manner on a transformation of \eqref{eq:monoNulllinear}  which takes the  control from the PDE to the ODE. { It is based on the change of variables $\theta =q_t-(\sigma_i(x)q_x)_x+\ell_p(\bar v,\bar w)q$}.
 We will verify that null controllability for  \eqref{eq:monoNulllinear} is equivalent to
    null controllability for
\begin{equation}\label{eq:monoNulllinear1}
    \!\!\!\!\!\left\{
        \begin{array}{lcl}
            \theta_t+\{\ell_q(\bar v,\bar w)-[\ell_p(\bar v,\bar w)]_t\} q
         = G+h1_{\omega}         &  \mbox{in}&    (0,L) \times (0,T),        \\
            \noalign{\smallskip}\dis
            q_t-(\sigma_i(x)q_x)_x+\ell_p(\bar v,\bar w)q=\theta &\mbox{in}&(0,L) \times (0,T),    \\
            \noalign{\smallskip}\dis
            \sigma_i(x)q_x\big|_{x=0}=\sigma_i(x)q_x\big|_{x=L} = 0                & \mbox{on}& (0,T),    \\
            \noalign{\smallskip}\dis
            \theta(\cdot,0) =\theta_0,\quad q(\cdot,0) =q_0 & \mbox{in}&    (0,L),
        \end{array}
    \right.
\end{equation}
    where $\theta_0=p_0-(\sigma_i (x)q_{0,x})_x +\ell_p(\bar v_0,\bar w_0)q_0$.

	First we verify the equivalence of systems \eqref{eq:monoNulllinear} and \eqref{eq:monoNulllinear1}.
Notice that, for every $(p_0,q_0)\in L^2(0,L)\times H^1(0,L)$ and every
	$G,h1_{\omega}\in L^2((0,L)\times(0,T))$,
	there exists a unique weak solution $(p,q)$ to~\eqref{eq:monoNulllinear} that satisfies
\[
	p\in L^2(0,T;H^1(0,L))\cap W^{1,2}(0,T;(H^1(0,L))^\prime)
\]
	and
\[
	q\in  W^{1,2}(0,T;H^1(0,L))\cap W^{2,2}(0,T;(H^1(0,L))^\prime)
\]
	and the variational formulation for a.e. $t\in[0,T]$
\begin{equation}\label{var:pq}
	\left\{
		\begin{array}{l}
	
	\langle p_t,w\rangle+(\sigma p_x, w_x) +(\ell_p(\bar v,\bar w)p,w)
	+ (\ell_q(\bar v,\bar w) q,w)=(G+h1_\om,w),~\forall w\in H^1(0,L),\\
	\noalign{\smallskip}\dis
	(q_t,v)=(p,v),~\forall v\in L^2(0,L),\\
	\noalign{\smallskip}\dis
	p(\cdot,0)=p_0 ~\hbox{and}~q(\cdot,0)=q_0,
\end{array}
	\right.
\end{equation}
	where $\langle \cdot,\cdot\rangle:=\langle \cdot,\cdot\rangle_{(H^1(0,L))^\prime,H^1(0,L)}$.\newline
	For system~\eqref{eq:monoNulllinear1}, there exists a unique weak solution $(\theta,q)$ that satisfies
\[
	\theta \in  W^{1,2}(0,T;(H^1(0,L))^\prime)
\]
	and
\[
	q \in  W^{1,2}(0,T;H^1(0,L))\cap W^{2,2}(0,T;(H^1(0,L))^\prime)
\]
   	and the variational formulation for a.e. $t\in[0,T]$
\begin{equation}\label{var:thetaq}
	\left\{
		\begin{array}{l}
		\langle\theta_t,v\rangle+(\{\ell_q(\bar v,\bar w)-[\ell_p(\bar v,\bar w)]_t\} q,v)=(G+h1_{\omega} ,v),~\forall v\in H^1(0,L)\\
	\noalign{\smallskip}\dis
	\langle q_t,w\rangle+(\sigma q_x, w_x) +(\ell_p(\bar v,\bar w)q,w)=
	\langle \theta,w\rangle,~\forall w\in H^1(0,L)\\
	\noalign{\smallskip}\dis
	\theta(\cdot,0)=\theta_0 ~\hbox{and}~q(\cdot,0)=q_0,
		\end{array}
	\right.
\end{equation}
	where again $\theta_0=p_0-(\sigma_i (x)q_{0,x})_x +\ell_p(\bar v_0,\bar w_0)q_0$.

To check the equivalence one can start by defining $\theta \in  W^{1,2}(0,T;(H^1(0,L))^\prime)$ such that
$$
	\left\{
		\begin{array}{lcl}
	 \theta_t  + \{\ell_q(\bar v,\bar w)-[\ell_p(\bar v,\bar w)]_t\}q= G+ h1_{\omega}         & \mbox{in}&    (0,L) \times (0,T),        \\
            \noalign{\smallskip}\dis
             \theta(\cdot,0) =\theta_0& \mbox{in}&    (0,L),
		\end{array}
	\right.
$$
	Then for each $w\in H^1(0,L)$ and integrating with respect to $t$ the first equation in \eqref{var:pq}
$$
\begin{alignedat}{2}
	\langle q_t,w\rangle+(\sigma q_x, w_x)=&
\left(\int_0^t[G+h1_\om-\ell_p(\bar v,\bar w)p-\ell_q(\bar v,\bar w) q]ds,w\right)\\
&+(p_0,w)+(\sigma q_{0,x}, w_x)\\
=&
\bigg\langle\int_0^t\theta_tds,w\bigg\rangle-\left(\int_0^t(\ell_p(\bar v,\bar w)q)_tds,w\right)+(p_0,w)+(\sigma q_{0,x}, w_x)\\
=&\langle\theta,w\rangle+(\ell_p(\bar v,\bar w)q,w)-\langle\theta(0)-\theta_0,w\rangle\\
=&\langle\theta,w\rangle+(\ell_p(\bar v,\bar w)q,w).
\end{alignedat}
$$
	Thus $(\theta,q)$ is the variational solution of \eqref{var:thetaq}.

	Conversely, let $(p_0,q_0)\in L^2(0,L)\times H^1(0,L)$ then $\theta_0=p_0-(\sigma_i (x)q_{0,x})_x +\ell_p(\bar v_0,\bar w_0)q_0\in (H^1(0,L))^\prime$ and
\[
	\theta \in  W^{1,2}(0,T;(H^1(0,L))^\prime)
\]
	and
\[
	q \in  W^{1,2}(0,T;H^1(0,L))\cap W^{2,2}(0,T;(H^1(0,L))^\prime),
\]
	as in \eqref{var:thetaq}.
	
	Regularity allows to differentiate the second equation in \eqref{var:thetaq} with respect to $t$ and, setting $p=q_t$ we obtain
$$
\begin{alignedat}{2}
		\langle p_t,w\rangle+(\sigma p_x, w_x) +(\ell_p(\bar v,\bar w)p,w)=&~
	\langle \theta_t,w\rangle-(\{\ell_p(\bar v,\bar w)\}_tq,w)\\
=&~-(\ell_q(\bar v,\bar w) q,w)+(G+h1_{\omega} ,w).
\end{alignedat}
$$
	Thus $(p,q)$ is the variational solution of \eqref{var:pq}.

\

    Let us introduce the function $A(x,t):=\{\ell_q(\bar v,\bar w)-[\ell_p(\bar v,\bar w)]_t\}$ and the non-autonomous
    elliptic operator $Kq:=-(\sigma_i(x)q_x)_x+\ell_p(\bar v,\bar w)q$. Then, the controllability for system \eqref{eq:monoNulllinear1}
    is formulated as follows: for any $(\theta_0,q_0)\in L^2(0,L)\times L^2(0,L)$ and any $G$ satisfying \eqref{eq:G} below,
    there exists a control $h\in L^2((0,L)\times(0,T))$, where its support contains a subset 
    satisfying Assumption \ref{AssumptionMC}, such that the
    associated solution to
\begin{equation}\label{eq:mono2}
    \left\{
        \begin{array}{lcl}
            \theta_t  + A(x,t)\,q= G+ h1_{\omega}         & \mbox{in}&    (0,L) \times (0,T),        \\
            \noalign{\smallskip}\dis
            q_t+Kq  =  \theta                    & \mbox{in}&    (0,L) \times (0,T),        \\
            \noalign{\smallskip}\dis
            \sigma_i(x)q_{x}\big|_{x=0}=\sigma_i(x)q_{x}\big|_{x=L} = 0                & \mbox{on}&     (0,T),    \\
            \noalign{\smallskip}\dis
            \theta(\cdot,0) =\theta_0,\quad q(\cdot,0) =q_0 & \mbox{in}&    (0,L),
        \end{array}
    \right.
\end{equation}
 satisfies
 $$
     \theta(\cdot,T) =q(\cdot,T)=0\quad \hbox{in}\quad (0,L).
 $$

\begin{rmq}\label{rmq_A}
	{ As we will see the null controllability for system \eqref{eq:mono2} will follow from a  Carleman inequality for its
	adjoint. In order to prove this  Carleman inequality we will combine Lemmas \ref{lemma:ode} and \ref{lemma:elliptic}. Notice also that  thanks to the Remark \ref{rmk:lplq}, $A\in L^\infty((0,L)\times(0,T))$. This will be  used for Theorem \ref{thm:Carl_mono} to 
	prove the Carleman inequality.}
\end{rmq}


\subsubsection{Carleman inequality}\label{Carleman}

	In this Section, we will present a suitable Carleman inequality for the so called adjoint of \eqref{eq:mono2},
	namely:
\begin{equation}\label{adj:mono}
	\left\{
		\begin{array}{lcl}
			- \varphi_t  -\psi=R 					&\mbox{in}&	 (0,L) \times (0,T), 		\\
			\noalign{\smallskip}\dis
			-\psi_t+  K\psi+A(x,t)\varphi=S 	&\mbox{in}&	 (0,L) \times (0,T),  		\\
			\noalign{\smallskip}\dis
			\sigma_i(x)\psi_{x}\big|_{x=0}=\sigma_i(x)\psi_{x}\big|_{x=L} = 0     				&\mbox{on}&	(0,T), 	\\
			\noalign{\smallskip}\dis
			\varphi(T) = \varphi_T,\quad\psi(T) = \psi_T 		&\mbox{in}&	(0,L),
		\end{array}
	\right.
\end{equation}
	where $R,S\in L^2((0,L)\times(0,T))$.
	
	Note that, for every $( \varphi_T,\psi_T )\in L^2(0,L)\times L^2(0,L)$ and every
	$R,S\in L^2((0,L)\times(0,T))$
	there exists a unique weak solution $(\varphi,\psi)$ to~\eqref{adj:mono} that satisfies
\[
	\varphi \in H^1(0,T;L^2(0,L))
	\quad\hbox{and}\quad
	\psi\in  L^2(0,T;H^1(0,L))\cap W^{1,2}(0,T;(H^1(0,L))')
\]
	and the variational formulation
$$
	\left\{
		\begin{array}{l}
	-(\varphi_t,v)-(\psi, v)=(R,v)~\forall v\in L^2(0,L),~\hbox{a.e.}~ t\in[0,T]\\
	\noalign{\smallskip}\dis
	-\langle\psi_t,w\rangle+(\sigma\psi_x, w_x) +(\ell_p(\bar v,\bar w)\psi,w)
	+ (A\varphi,w)=(S,w)~\forall w\in H^1(\Om),~\hbox{a.e.}~ t\in[0,T]\\
	\noalign{\smallskip}\dis
	\varphi(\cdot,T)=\varphi_T ~\hbox{and}~\psi(\cdot,T)=\psi_T.
		\end{array}
	\right.
$$
	
	In order to prove the controllability result for the linear system given by Proposition \ref{nullcontrol}, we have to establish an appropriate Carleman estimate.
	Precisely, we have the following result:
\begin{thm}\label{thm:Carl_mono}
	There exist positive constants $s_3$, $\lambda_3\geq 1$ and~$C_3>0$, only depending on $L$ and $\omega$, such that,
	for any $\varphi_T,\psi _T\in L^2(0,L)$ the solution $(\psi,\varphi)$ to adjoint system \eqref{adj:mono} satisfies:
\begin{align}
		&\iiTL[(s\xi)^{-1}(|\psi_{xx}|^2+|\psi_t|^2)+\lambda^2(s\xi)|\psi_x|^2+\lambda ^4(s\xi)^3|\psi|^2+\lambda ^2 (s\xi)   |\varphi |^2 ]e^{-2s\alpha}dxdt\nonumber\\
		\le &~C_3
		\left(\iiTL\![\lambda ^4(s\xi)^3|R|^2+|S|^2] e^{-2s\alpha} dxdt +
		 { s^7\lambda^8}\int_0^T \!\!\!\int_{\om(t)} \xi ^7  |\varphi|^2e^{-2s\alpha}  dxdt\right).\nonumber
	\end{align}
	for all  $s \geq  s_3(T + T^2) $ and for all $\lambda \geq \lambda_3$.
\end{thm}
\begin{proof}
	First, applying Lemma \ref{lemma:ode} to $\eqref{adj:mono}_1$, we obtain
\be  \label{est:ode}
	\!\!\!s \lambda ^2 \iiTL\!\!\!\xi  |  \varphi |^2 e^{-2s\alpha}dxdt \le C_1
	\left(\iiTL\!\!\!|\psi +R|^2 e^{-2s\alpha} dxdt
	+s^2 \lambda ^2 \int_0^T\!\!\!\int_{\om_1(t)}\!\!\!\xi ^2  |\varphi|^2e^{-2s\alpha} dxdt\right)\!.
\ee
	
	Next, applying Lemma \ref{lemma:elliptic} to $\eqref{adj:mono}_2$, we obtain
\begin{equation}\label{est:elliptic}
\begin{alignedat}{2}
	&\iiTL[(s\xi)^{-1}(|\psi_{xx}|^2+|\psi_t|^2)+\lambda^2(s\xi)|\psi_x|^2+\lambda ^4(s\xi)^3|\psi|^2]
	e^{-2s\alpha}\,dx\,dt\\
	&\le C_2 \left(\iiTL|  S-\ell_p(\bar v,\bar w)\psi-A(x,t)\varphi|^2 e^{-2s \alpha}\,dx\,dt
	+\int_0^T\!\!\!\int_{\om_1(t)}
	\lambda ^4(s\xi)^3|\psi|^2e^{-2s\alpha}\,dx\,dt\right). 	
\end{alignedat}
\end{equation}

Adding \eqref{est:ode} and  \eqref{est:elliptic}, and absorbing the lower order terms from the right hand side by taking $\lambda$ large enough,  we get
\begin{equation}\label{est:elliptic+ode}
\begin{alignedat}{2}
	&\iiTL[((s\xi)^{-1}(|\psi_{xx}|^2+|\psi_t|^2)+\lambda^2(s\xi)|\psi_x|^2+\lambda ^4(s\xi)^3|\psi|^2+\lambda ^2 (s\xi)   |\varphi |^2 ]e^{-2s\alpha}dxdt\\
		&\le C \left(\iiTL\!(|R|^2+|S|^2) e^{-2s\alpha} dxdt +
		\int_0^T \!\!\! \int_{\om_1(t)}[\lambda ^2(s\xi)^2 |\varphi|^2 + \lambda ^4 (s\xi)^3 |\psi|^2] e^{-2s\alpha} dxdt \right).
	\end{alignedat}
\end{equation}

	Now, we need to eliminate the local integral of $\psi$ appearing in the right hand side of \eqref{est:elliptic+ode}.
	To do that, let us introduce a function $\zeta \in C^\infty([0,L]\times[0,T])$ satisfying
\begin{equation}\label{zeta}
	\begin{array}{lll}
	&0\le \zeta\le 1&\forall (x,t)\in [0,L]\times[0,T],\\
	\noalign{\smallskip}\dis
	&\zeta(x,t) =1&\forall x\in\om_1(t),~\forall t\in[0,T],\\
	\noalign{\smallskip}\dis
	&\zeta(x,t) =0& \forall x\in [0,L]\setminus \overline{\om(t)},~ \forall t\in[0,T].
	\end{array}
\end{equation}

	In this way, we have that
\begin{equation}\label{eq:cut-offBZK}
	s^3 \lambda ^4\int_0^T \!\!\! \int_{\om_1(t)}\xi ^3 |\psi|^2 e^{-2s\alpha} dxdt
	\leq s^3\lambda ^4\iiTL\zeta \xi ^3 |\psi|^2 e^{-2s\alpha} dxdt .
\end{equation}
	Using $ \eqref{adj:mono}_1$ and integrating by parts with respect to  $t$, we obtain
\begin{align*}
	s^3\lambda ^4\iiTL\zeta \xi ^3 |\psi|^2 e^{-2s\alpha} dxdt
	=& -s^3\lambda ^4 \iiTL \zeta\xi ^3\psi(\varphi_t+R) e^{-2s\alpha}  dxdt\\
	=& \,s^3\lambda ^4 \iiTL\zeta\xi^3  \psi_t\varphi e^{-2s\alpha}  dxdt
	 +  s^3\lambda ^4\iiTL \zeta_t\xi ^3\varphi\psi e^{-2s\alpha}  dxdt  \\
	&+s^3\lambda ^4 \iiTL\zeta\xi^2(3\xi_t - 2 s\xi\alpha_t) \varphi\psi e^{-2s\alpha}  dxdt\\
	& -s^3\lambda ^4 \iiTL \zeta\xi ^3\psi R e^{-2s\alpha}  dxdt\\
   	:=&\,D_1+D_2+D_3+D_4.
\end{align*}
	For $D_1$, we notice that for every $\epsilon >0$, we obtain
\begin{equation}\label{est:A21}
	\begin{alignedat}{2}
		|D_1| \le&~ \epsilon  \iiTL(s\xi)^{-1}  |\psi_t|^2 e^{-2s\alpha} dxdt
		+C_{\epsilon}s^7\lambda^8 \iiTL \zeta^2\xi^7  |\varphi|^2
		e^{-2s\alpha}  dxdt\\
		\le&~ \epsilon  \iiTL(s\xi)^{-1}  |\psi_t|^2 e^{-2s\alpha} dxdt
		+C_{\epsilon}s^7\lambda^8 \int_0^T \!\!\! \int_{\om(t)}\xi^7  |\varphi|^2
		e^{-2s\alpha}  dxdt.
	\end{alignedat}
\end{equation}
	Next, $D_2$ is estimated like the term $D_1$:
\begin{equation}\label{est:A23}
	|D_2| \leq \epsilon s^3\lambda^4  \iiTL \xi ^3 |\psi|^2  e^{-2s\alpha} dxdt
	+ C_{\epsilon} s^3\lambda^4 \int_0^T \!\!\! \int_{\om(t)}\xi ^3 |\varphi|^2 e^{-2s\alpha} dxdt.
\end{equation}
	for any $\epsilon >0$.
	Since $|\xi_t| +|\alpha_t| \leq C \lambda\xi  ^2$, for every $\epsilon >0$, we infer that
\begin{equation}\label{est:A22}
	\begin{alignedat}{2}
	|D_3| \le&~  Cs^4\lambda^5 \iiTL \zeta    \xi^5  |\varphi\psi| e^{-2s\alpha} dxdt \nonumber  \\
	\le&~\epsilon s^3\lambda^4\iiTL\xi ^{3}|\psi|^2 e^{-2s\alpha} dxdt
	+C_\epsilon s^5\lambda^6\int_0^T \!\!\! \int_{\om(t)} \xi ^7  |\varphi|^2e^{-2s\alpha}  dxdt .
	\end{alignedat}
\end{equation}
		Finally, $D_4$ is estimated as follows:
\begin{equation}\label{est:A24}
	|D_4| \leq \epsilon s^3\lambda^4  \iiTL \xi ^3 |\psi|^2  e^{-2s\alpha} dxdt
	+ C_{\epsilon} s^3\lambda^4 \int_0^T \!\!\! \int_{0}^{L}\xi ^3 |R|^2 e^{-2s\alpha} dxdt,
\end{equation}
	for any $\epsilon >0$.
Combining \eqref{est:elliptic+ode} and  \eqref{eq:cut-offBZK}-\eqref{est:A24}, and absorbing the lower order terms,  we get
\begin{equation}\label{est:finalmono}
\begin{alignedat}{2}
	&\iiTL[((s\xi)^{-1}(|\psi_{xx}|^2+|\psi_t|^2)+\lambda^2(s\xi)|\psi_x|^2+\lambda ^4(s\xi)^3|\psi|^2+\lambda ^2 (s\xi)   |\varphi |^2 ]e^{-2s\alpha}dxdt\\
		&\le C_3  \left(\iiTL\![\lambda ^4(s\xi)^3|R|^2+|S|^2] e^{-2s\alpha} dxdt +
		 s^7\lambda^6\int_0^T \!\!\!\int_{\om(t)} \xi ^7  |\varphi|^2e^{-2s\alpha}  dxdt\right).
	\end{alignedat}
\end{equation}
\end{proof}


\subsubsection{Null controllability}

	{ In this section we  prove the null controllability property for system \eqref{eq:monoNulllinear}
	with a right hand side which decays exponentially as $t\rightarrow T^-$. As we will see in the next section,
	this result will be useful to deduce the local null controllability of \eqref{eq:monoNull1}. First, however,  we need to deduce from Theorem~\ref{thm:Carl_mono} a second Carleman inequality with weights that
	do not vanish at $t = 0$}.

	More precisely, let us consider the function
\begin{equation*}
	l(t)=
\left\{
\begin{array}{lll}
	1				&\text{for}& 0  \leq t\leq T/2,\\
	r(t)				&\text{for}& T/2\leq t\leq T,
\end{array}
\right.
\end{equation*}
	with $r$ defined in \eqref{eq:r},
	and the following associated weight functions:
\[
	\begin{array}{ll}
	\bar\alpha(x,t):= l(t) (e^{2\lambda \|\eta \|_\infty} - e^{\lambda\eta (x,t)})&
	\forall (x,t)\in (0,L)\times(0,T),\\
	\noalign{\smallskip}\dis
	\bar\xi(x,t) := l(t) e^{ \lambda\eta (x,t)}&\forall (x,t)\in (0,L)\times(0,T),\\
	\noalign{\smallskip}\dis
	\bar\alpha^*(t):= \min\limits_{x\in [0,L]}\bar\alpha(x,t)=l(t) (e^{2\lambda \|\eta \|_\infty} - e^{\lambda\|\eta (\cdot,t)\|_\infty}),&
	\forall t\in (0,T),\\
	\noalign{\smallskip}\dis
	\bar\xi^*(t) := \max\limits_{x\in [0,L]}\bar\xi(x,t)=l(t) e^{ \lambda \|\eta (\cdot,t)\|_\infty}&\forall t\in (0,T),\\
	\noalign{\smallskip}\dis
	\hat{\bar\alpha}(t):= \max\limits_{x\in [0,L]}\bar\alpha(x,t),&
	\forall t\in (0,T),\\
	\noalign{\smallskip}\dis
		\hat{\bar\xi}(t) := \min\limits_{x\in [0,L]}\bar\xi(x,t)&\forall t\in (0,T),
\end{array}
\]	
	and
\begin{equation*}
\begin{array}{llll}
	\noalign{\smallskip}\dis
	\rho_1(t)=(\bar\xi^*)^{-3/2}e^{s\bar\alpha^*},
	~\, \rho_2(t)= e^{s\bar\alpha^*},
	~\, \rho_3(t)=(\bar\xi^*)^{-7/2}e^{s\bar\alpha^*},
	~\, \rho_4(t)=(\hat{\bar\xi})^{-1/2} e^{s\hat{\bar\alpha}}.
\end{array}
\end{equation*}

\begin{rmq}\label{rmq:infty}
	Notice that $\bar\alpha^*$ and $\hat{\bar\alpha}$ blow up exponentially as $t\to T^-$ and
$\bar\xi^*$ and $\hat{\bar\xi}$  blow up polynomially as $t\to T^-$.
\end{rmq}

	By combining Theorem~\ref{thm:Carl_mono} and classical energy estimates satisfied by $\varphi$ and $\psi$,
	we easily deduce the following result:
\begin{lemma}\label{theo:linearnullcontrol}
	Under the assumptions of Theorem~\emph{\ref{teo1}}, there exist positive constants
	 $s_4$, $\lambda_4\geq 1$ and~$C_4>0$, only depending on $L$ and $\omega$ such that, for any
	$\varphi _T,\psi_T \in L^2(0,L)$ and any $R,S\in L^2((0,L)\times(0,T))$, the solution to the adjoint system \eqref{adj:mono} 		satisfies:
\begin{equation}\label{carleman_0}
	\begin{alignedat}{2}
	&\!\!\!\!\!\iiTL\![{\bar\xi}^{-1}(|\psi_{xx}|^2\!+\!|\psi_t|^2)\!+\!\bar\xi|\psi_x|^2
		\!+\!\bar\xi^3|\psi|^2\!+\!\bar\xi  |\varphi |^2 ]e^{-2s\bar\alpha}dxdt
		+ \!\|\varphi(\cdot,0)\|^2_{L^2(0,L)}\!+\!\|\psi(\cdot,0)\|^2_{L^2(0,L)}\\
		&\le C_4
		\left(\iiTL\![\rho_1^{-2}|R|^2+\rho_2^{-2}|S|^2] dxdt +
		 \int_0^T \!\!\!\int_{\om(t)} \rho_3^{-2}|\varphi|^2  dxdt\right).
\end{alignedat}
\end{equation}
	for all  $s \geq  s_4(T + T^2) $ and for all $\lambda \geq \lambda_4$.
\end{lemma}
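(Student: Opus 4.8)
The plan is to upgrade the Carleman estimate of Theorem~\ref{thm:Carl_mono}, whose weights $\alpha,\xi$ blow up as $t\to 0^+$ (because $r(t)=1/t$ near $0$), into one whose weights $\bar\alpha,\bar\xi$ stay bounded on $[0,T/2]$, at the price of adding the $L^2$-norms of $\varphi(\cdot,0)$ and $\psi(\cdot,0)$ on the left. Since $l(t)=1$ on $[0,T/2]$ and $l(t)=r(t)$ on $[T/2,T]$, the weights $\bar\alpha$ and $\bar\xi$ coincide with $\alpha,\xi$ on $[T/2,T]$ and are smooth and bounded (with $\bar\xi$ bounded below away from zero) on $[0,T/2]$.

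First I would split all the space-time integrals in \eqref{est:finalmono} into the pieces over $(0,L)\times(0,T/2)$ and $(0,L)\times(T/2,T)$. On $(T/2,T)$ the weights agree, so those contributions are already controlled by the right-hand side of \eqref{est:finalmono} with $\xi$ replaced by $\bar\xi$. For the piece over $(0,L)\times(T/2,T)$ I would, as usual, throw in a cutoff in time: pick $\chi\in C^\infty([0,T])$ with $\chi\equiv 1$ on $[0,T/2]$ and $\chi\equiv 0$ on $[3T/4,T]$, apply standard parabolic energy estimates to $(\chi\varphi,\chi\psi)$, which solves \eqref{adj:mono} with extra right-hand side terms $\chi_t\varphi$, $\chi_t\psi$ supported in $(T/2,3T/4)$, a region where all the weights $e^{-2s\bar\alpha}$, $\bar\xi$, etc., are bounded above and below by positive constants. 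This yields a bound for $\|\varphi\|^2_{L^2((0,L)\times(0,3T/4))}+\|\psi\|^2_{L^2(0,T/2;H^1)}+\|\varphi(\cdot,0)\|^2+\|\psi(\cdot,0)\|^2$ (and, using the equation, the $H^1(0,T/2;L^2)$ and $L^2(0,T/2;H^2)$ norms) in terms of $\|R\|_{L^2}$, $\|S\|_{L^2}$ and $\|\varphi\|^2_{L^2((T/2,3T/4)\times(0,L))}$, and this last term — being over a compact-in-$(0,T)$ region where $\xi^{-1}e^{2s\alpha}$ is bounded — is in turn absorbed by the left side of \eqref{est:finalmono}. Multiplying through by the appropriate bounded constants converts the $[0,T/2]$ part of the left-hand side of \eqref{carleman_0} (with the bounded weights $\bar\xi,e^{-2s\bar\alpha}$) and the two $t=0$ norms into something dominated by the right-hand side of \eqref{est:finalmono}.

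Next I would rewrite the right-hand side of \eqref{est:finalmono} in terms of the weight functions $\rho_i$. On $[T/2,T]$ we have $e^{-2s\alpha}=e^{-2s\bar\alpha}\le e^{-2s\bar\alpha^*}=\rho_2^{-2}$, and $\lambda^4(s\xi)^3 e^{-2s\alpha}\le C\rho_1^{-2}$ after checking that $(\bar\xi^*)^{-3}e^{2s\bar\alpha^*}(s\bar\xi)^3 e^{-2s\bar\alpha}$ is bounded — this uses $\bar\xi\le\bar\xi^*$ and $\bar\alpha^*\le\bar\alpha$, plus that near $t=T$ the polynomial factors are dominated by the exponential $e^{s(\bar\alpha-\bar\alpha^*)}\to\infty$; on $[0,T/2]$ all weights are bounded so the estimate is trivial there. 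Similarly $\xi^7 e^{-2s\alpha}\le C\rho_3^{-2}$ on the support of the observation term, again comparing $(\bar\xi^*)^{-7}e^{2s\bar\alpha^*}\xi^7 e^{-2s\alpha}$. After these pointwise bounds, \eqref{carleman_0} follows by combining the two regions and fixing $s,\lambda$ large, with $s_4\ge s_3$, $\lambda_4\ge\lambda_3$.

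The main obstacle I anticipate is purely bookkeeping: making sure the constants introduced when passing from the (unbounded) weights on $[0,T/2]$ to bounded ones genuinely depend only on $L$ and $\omega$ (and not on $s$), and verifying the elementary but fussy inequalities $\rho_i$-vs-$\xi$-power comparisons near $t=T$, which hinge on the fact (noted in Remark~\ref{rmq:infty}) that $\bar\alpha^*,\hat{\bar\alpha}$ blow up exponentially while $\bar\xi^*,\hat{\bar\xi}$ blow up only polynomially, so any fixed power of $\bar\xi^*$ is absorbed by an arbitrarily small exponential gap in $\bar\alpha$. No genuinely new analytic idea is needed beyond Theorem~\ref{thm:Carl_mono} and the standard energy estimate, so I would keep the write-up to a couple of lines invoking ``classical energy estimates'' as the statement already suggests.
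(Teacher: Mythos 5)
Your proposal is correct and follows essentially the same route as the paper: a cutoff in time ($\equiv 1$ on $[0,T/2]$, $\equiv 0$ on $[3T/4,T]$) combined with classical parabolic energy estimates handles the region where the new weights no longer vanish and produces the $t=0$ norms, while on $(T/2,T)$ the weights coincide with those of Theorem~\ref{thm:Carl_mono}, whose right-hand side is then converted to the $\rho_i^{-2}$ weights by the pointwise comparisons $\xi\le\bar\xi^*$, $\alpha\ge\bar\alpha^*$ you indicate. The only slip is that the cutoff residual also contains the $L^2((0,L)\times(T/2,3T/4))$ norm of $\psi$, not just of $\varphi$, but it is absorbed by exactly the same argument (the weights are bounded below there, so it is controlled by the left-hand side of the Carleman estimate on $(T/2,T)$).
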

\begin{proof}
	Let us start by proving the following estimate for system \eqref{adj:mono}:
\begin{equation}\label{energy_est}
\begin{alignedat}{2}
		&\|\varphi\|^2_{ W^{1,2}(0,T/2;L^2(0,L))}+
		\|\psi\|^2_{L^2(0,T/2;H^2_\nu(0,L))\cap W^{1,2}(0,T/2;L^2(0,L))}\\
		\le &
		C\bigg(\|(R,S)\|^2_{L^2(0,3T/4;[L^2(0,L)]^2)}+{1\over T^2}\|(\varphi,\psi)\|^2_{L^2(T/2,3T/4;[L^2(0,L)]^2)}\bigg).
\end{alignedat}
\end{equation}
	To do that, let us introduce a function $\kappa\in C^1([0,T])$ with
$$
	\kappa\equiv1\quad\hbox{in}\quad [0,T/2],
	\qquad\kappa\equiv0\quad\hbox{in}\quad [3T/4,T],
	\qquad |\kappa'|\leq C/T,
$$
	for some $C>0$. Using classical energy estimates for the system satisfied by $(\kappa\varphi,\kappa\psi)$, which is
	analogous to \eqref{adj:mono}, we obtain
\begin{align*}
		&\|\kappa\varphi\|^2_{ W^{1,2}(0,T;L^2(0,L))}+
		\|\kappa\psi\|^2_{L^2(0,T;H^2_\nu(0,L))\cap W^{1,2}(0,T;L^2(0,L))}\\
		\le &
		C\bigg(\|(\kappa R,\kappa S)\|^2_{L^2(0,T;[L^2(0,L)]^2)}+\|(\kappa'\varphi,\kappa'\psi)\|^2_{L^2(0,T;[L^2(0,L)]^2)}\bigg),
\end{align*}
	which leads to \eqref{energy_est}.
	
	Since the weights are bounded from above and below, using \eqref{energy_est}, we obtain
	a first estimate in $(0,L)\times(0,T/2)$:
\begin{equation}\label{carleman_001}
\begin{alignedat}{2}
		&\!\!\!\!\!\!\!\!\!\int_0^{T\over2}\!\!\!\!\int_0^L\!\![{\bar\xi}^{-1}(|\psi_{xx}|^2\!+\!|\psi_t|^2)\!+\!\bar\xi|\psi_x|^2
		\!+\!\bar\xi^3|\psi|^2\!+\!\bar\xi|\varphi |^2 ]e^{-2s\bar\alpha}dxdt
		\!+\!\|\varphi(\cdot,0)\|^2_{L^2(0,L)}\!+\!\|\psi(\cdot,0)\|^2_{L^2(0,L)} \\
		\le &~C(T,s,\lambda)
		\left(\int_0^{3T\over4}\!\!\!\!\!\int_0^L\![\rho_1^{-2}|R|^2+\rho_2^{-2}|S|^2] dxdt +
		 \int_{T\over2}^{3T\over4}\!\!\!\!\!\int_0^L[\bar\xi^3|\psi|^2+\bar\xi|\varphi |^2 ]e^{-2s\bar\alpha}dxdt\right).
\end{alignedat}
\end{equation}	
	
	On the other hand, since $\alpha=\bar\alpha$ and $\xi=\bar\xi$ in $(0,L)\times(T/2,T)$, we have:
\begin{align}\label{carleman_002}
		&\int_{T\over2}^T\!\!\!\int_0^L[(s{\bar\xi})^{-1}(|\psi_{xx}|^2+|\psi_t|^2)+\lambda^2(s\bar\xi)|\psi_x|^2
		+\lambda ^4(s\bar\xi)^3|\psi|^2+\lambda ^2 (s\bar\xi)  |\varphi |^2 ]e^{-2s\bar\alpha}dxdt
		 \nonumber\\
		 =&\int_{T\over2}^T\!\!\!\int_0^L[(s{\xi})^{-1}(|\psi_{xx}|^2+|\psi_t|^2)+\lambda^2(s\xi)|\psi_x|^2
		+\lambda ^4(s\xi)^3|\psi|^2+\lambda ^2 (s\xi)  |\varphi |^2 ]e^{-2s\alpha}dxdt
		 \\
		\le &~\iiTL[((s\xi)^{-1}(|\psi_{xx}|^2+|\psi_t|^2)+\lambda^2(s\xi)|\psi_x|^2+\lambda ^4(s\xi)^3|\psi|^2+\lambda ^2 (s\xi)   |\varphi |^2 ]e^{-2s\alpha}dxdt.\nonumber
\end{align}	
	In this way we obtain by Theorem~\ref{thm:Carl_mono},
	\begin{align*}\label{carleman_003}
		&
		\int_{T\over2}^T\!\!\!\int_0^L[{\bar\xi}^{-1}(|\psi_{xx}|^2+|\psi_t|^2)+\bar\xi|\psi_x|^2
		+\bar\xi^3|\psi|^2+\lambda ^2 \bar\xi |\varphi |^2 ]e^{-2s\bar\alpha}dxdt
		 \\
		 \le &~C(T,s,\lambda)
		\left(\iiTL\![\xi^3|R|^2+|S|^2] e^{-2s\alpha} dxdt +
		 \int_0^T \!\!\!\int_{\om(t)} \xi ^7  |\varphi|^2e^{-2s\alpha}  dxdt\right).
\end{align*}	
	Finally, from the definition of $\bar\alpha$, $\bar\xi$, $\bar\alpha^*$ and $\bar\xi^*$, we deduce:
\begin{align*}
		&\int_{T\over2}^T\!\!\!\int_0^L[{\bar\xi}^{-1}(|\psi_{xx}|^2+|\psi_t|^2)+\bar\xi|\psi_x|^2
		+\bar\xi^3|\psi|^2+\lambda ^2 \bar\xi |\varphi |^2 ]e^{-2s\bar\alpha}dxdt
		 \\
		 \le &~C(T,s,\lambda)\left(\iiTL\![\rho_1^{-2}|R|^2+\rho_2^{-2}|S|^2] dxdt +
		 \int_0^T \!\!\!\int_{\om(t)} \rho_3^{-2}|\varphi|^2  dxdt\right),
\end{align*}
	which, combined with \eqref{carleman_001}, provides \eqref{carleman_0}.
\end{proof}

	\

	The next step is to prove null controllability of the linear system \eqref{eq:monoNulllinear}.
	Of course, we will need some specific conditions on the source $G$.
	Thus let us introduce the linear operators $\mathcal{M}_1$ and $\mathcal{M}_2$ by
\begin{equation}\label{Ls}
	\mathcal{M}_1(\theta)= \theta_{t}
	\quad
	\hbox{and}
	\quad
	\mathcal{M}_2(q)= q_{t}+ Kq,
\end{equation}
	and the spaces
\begin{align*}
	E_0
		=&~ \{\, (q,h): \rho_1\mathcal{M}_2(q), \rho_2 q,\rho_3h1_\om\in L^2((0,L)\times(0,T)),\\
		& \ \rho_4^{1/2} q \in W^{1,2}(0,T;L^2(0,L))\cap L^2(0,T;H^2_\nu(0,L))\cap L^{\infty}(0,T;H^1(0,L)),\\
		 & \ \rho_4^{1/2}\mathcal{M}_2(q)\in W^{1,2}(0,T;L^2(0,L))\,\}
\end{align*}
	and
\begin{align*}
	E  =&~ \{\, (q,h): (q,h) \in E_0,\,\,	 								
		\rho_4[\mathcal{M}_1(\mathcal{M}_2(q)) + Aq - h1_{\omega}]\in L^2((0,L)\times(0,T))\\
		& \ \rho_4^{1/2} \mathcal{M}_1(q) \in W^{1,2}(0,T;(H^1(0,L))')\cap  L^2(0,T;H^1(0,L))\cap  L^\infty(0,T;L^2(0,L)),\\
		& \ \rho_4^{1/2} \mathcal{M}_1(q) \in L^4((0,L)\times(0,T)), \rho_4^{1/3} \mathcal{M}_1(q) \in L^6((0,L)\times(0,T)),
		\,  q(\cdot,0)\in H^2_\nu(0,L)\,\}.
\end{align*}

	It is clear that $E_0$ and $E$ are Banach spaces for the norms $\|\cdot\|_{E_0}$ and $\|\cdot\|_{E}$, where
\begin{align*}
	\|(q,h)\|_{E_0} =&~ \bigl(\|\rho_1\mathcal{M}_2(q)\|^2_{L^2((0,L)\times(0,T))} +\|\rho_2q\|^2_{L^2((0,L)\times(0,T))}
	+\|\rho_3h1_\om\|^2_{L^2((0,L)\times(0,T))}\\
	&+\|\rho_4^{1/2}(\mathcal{M}_2(q),q)\|^2_{H^1(0,T;L^2(0,L))}	
	+ \|\rho_4^{1/2} q\|^2_{ L^2(0,T;H^2_\nu(0,L))\cap L^{\infty}(0,T;H^1(0,L))}   )^{1/2}
\end{align*}
	and
\begin{align*}
 	\|(q,h)\|_{E}= &~\bigl(\|(q,h)\|^2_{E_0} +\|\rho_4[\mathcal{M}_1(\mathcal{M}_2(q)) + Aq - h1_{\omega}]\|^2_{L^2((0,L)\times(0,T))}\\
	& \|\rho_4^{1/2} \mathcal{M}_1(q)\|^2_{L^4((0,L)\times(0,T))}   + \|\rho_4^{1/3} \mathcal{M}_1(q)\|^2_{L^6((0,L)\times(0,T))} +
	\|q(\cdot,0)\|^2_{H^2_\nu(0,L)} \\
	&\|\rho_4^{1/2} \mathcal{M}_1(q)\|^2_{W^{1,2}(0,T;(H^1(0,L))')\cap  L^2(0,T;H^1(0,L))\cap  L^\infty(0,T;L^2(0,L))}
	)^{1/2}.
\end{align*}

	{ We will now present a null controllability result for \eqref{eq:monoNulllinear}. This will be crucial to deduce controllability for the nonlinear system \eqref{eq:monoNull1} in the next section.}
\begin{propo}\label{nullcontrol}
	Let the assumptions of Theorem~\emph{\ref{teo1}} hold, and assume that
	$p_0\in L^{2}(0,L)$, $q_0\in H^{2}_\nu(0,L)$, and
\begin{equation}\label{eq:G}
	\rho_4G\in L^2((0,L)\times(0,T)).
\end{equation}
	Then there exists a control $h$, with $\textnormal{supp}\,h(\cdot,t)\subset\om(t)\,~\forall t\in(0,T)$, such that for the solution $(q,p)$ to \eqref{eq:monoNulllinear}
	we have that $(q,h)\in E$.
	In particular, we have that
\begin{equation}\label{eq:nullcontrol_p_q}
	p(\cdot,T)=q(\cdot,T)=0.
\end{equation}
\end{propo}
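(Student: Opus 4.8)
The plan is to follow the Fursikov--Imanuvilov duality method: construct the control from the Carleman estimate of Lemma~\ref{theo:linearnullcontrol} via a Lax--Milgram argument, and then upgrade the resulting trajectory to membership in $E$ by weighted energy estimates. By the equivalences established above between \eqref{eq:monoNulllinear}, \eqref{eq:monoNulllinear1} and \eqref{eq:mono2}, it is enough to produce $h$ with $\supp h(\cdot,t)\subset\om(t)$ such that the solution $(\theta,q)$ of \eqref{eq:mono2}, with $\theta_0=p_0-(\sigma_i q_{0,x})_x+\ell_p(\bar v_0,\bar w_0)q_0\in L^2(0,L)$ (recall $q_0\in H^2_\nu(0,L)$), satisfies $\theta(\cdot,T)=q(\cdot,T)=0$ together with the weighted bounds defining $E$.

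On the space $P_0$ of smooth pairs $(\varphi,\psi)$ for which $\psi$ satisfies the Neumann conditions, introduce the symmetric bilinear form
\[
	b\big((\hat\varphi,\hat\psi),(\varphi,\psi)\big)=\iiTL\rho_1^{-2}\hat R R\,dxdt+\iiTL\rho_2^{-2}\hat S S\,dxdt+\int_0^T\!\!\!\int_{\om(t)}\rho_3^{-2}\hat\varphi\varphi\,dxdt,
\]
where $R:=-\varphi_t-\psi$ and $S:=-\psi_t+K\psi+A\varphi$ (so that $(\varphi,\psi)$ solves \eqref{adj:mono} with these right-hand sides), and similarly for the hatted terms. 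Lemma~\ref{theo:linearnullcontrol} shows that $b$ is coercive for a norm that also dominates $\|\varphi(\cdot,0)\|^2_{L^2(0,L)}+\|\psi(\cdot,0)\|^2_{L^2(0,L)}$ and $\iiTL\bar\xi|\varphi|^2e^{-2s\bar\alpha}\,dxdt$; since $\rho_4^{-2}=\hat{\bar\xi}e^{-2s\hat{\bar\alpha}}\le\bar\xi e^{-2s\bar\alpha}$, this last term controls $\|\rho_4^{-1}\varphi\|^2_{L^2}$. Let $P$ be the completion of $P_0$ for $\|(\varphi,\psi)\|_P:=b((\varphi,\psi),(\varphi,\psi))^{1/2}$. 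Because $\rho_4G\in L^2$ by \eqref{eq:G} and $\theta_0,q_0\in L^2(0,L)$, the linear form $(\varphi,\psi)\mapsto\iiTL G\varphi\,dxdt+(\theta_0,\varphi(\cdot,0))+(q_0,\psi(\cdot,0))$ is continuous on $P$, and Lax--Milgram (here just Riesz representation) yields a unique $(\hat\varphi,\hat\psi)\in P$ with $b((\hat\varphi,\hat\psi),(\varphi,\psi))=\iiTL G\varphi\,dxdt+(\theta_0,\varphi(\cdot,0))+(q_0,\psi(\cdot,0))$ for all $(\varphi,\psi)\in P$.

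Setting $\theta:=\rho_1^{-2}\hat R$, $q:=\rho_2^{-2}\hat S$ and $h:=\rho_3^{-2}\hat\varphi\,1_\om$, one gets $\rho_1\theta,\rho_2q,\rho_3h1_\om\in L^2((0,L)\times(0,T))$ and $\supp h(\cdot,t)\subset\om(t)$. The variational identity above is exactly the weak formulation of \eqref{eq:mono2} tested against arbitrary adjoint trajectories: choosing test functions compactly supported in time recovers the two equations of \eqref{eq:mono2} and the initial conditions $\theta(\cdot,0)=\theta_0$, $q(\cdot,0)=q_0$, while the freedom in the terminal datum $(\varphi_T,\psi_T)$ forces the boundary contributions at $t=T$ to vanish, i.e. $\theta(\cdot,T)=q(\cdot,T)=0$. (The integrations by parts underlying this are justified by density, approximating $(\hat\varphi,\hat\psi)$ by elements of $P_0$.) Transporting this back through the equivalences gives the solution $(p,q)$ of \eqref{eq:monoNulllinear} with $p(\cdot,T)=q(\cdot,T)=0$, which is \eqref{eq:nullcontrol_p_q}.

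It remains to prove $(q,h)\in E$, and this is the technical core. Starting from $\rho_1\theta,\rho_2q,\rho_3h1_\om,\rho_4G\in L^2$ and $\theta(\cdot,T)=q(\cdot,T)=0$, one runs a bootstrap of weighted energy estimates on \eqref{eq:mono2}: multiplying $q_t+Kq=\theta$ by suitable powers of the weights and invoking parabolic regularity (with the boundary terms produced by the Neumann conditions, exactly as in Lemma~\ref{lemma:elliptic}) to bound $\rho_4^{1/2}q$ in $W^{1,2}(0,T;L^2(0,L))\cap L^2(0,T;H^2_\nu(0,L))\cap L^\infty(0,T;H^1(0,L))$; then, using $\theta_t=G+h1_\om-Aq$, the regularity of $A$ (hence of $\ell_p,\ell_q$) from Remark~\ref{rmk:lplq}, and $\rho_4^{1/2}(G,h1_\om)\in L^2$, to bound $\rho_4^{1/2}\mathcal{M}_2(q)=\rho_4^{1/2}\theta$ in $W^{1,2}(0,T;L^2(0,L))$; and, differentiating the $q$-equation in time, to control $\rho_4^{1/2}\mathcal{M}_1(q)=\rho_4^{1/2}q_t$ in $W^{1,2}(0,T;(H^1(0,L))')\cap L^2(0,T;H^1(0,L))\cap L^\infty(0,T;L^2(0,L))$, whence the $L^4$ and $L^6$ bounds on $\rho_4^{1/2}q_t$ and $\rho_4^{1/3}q_t$ via the one-dimensional embedding $W^{1,2}(0,T;L^2)\cap L^2(0,T;H^1)\hookrightarrow C([0,L]\times[0,T])$ combined with interpolation. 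Finally $q(\cdot,0)=q_0\in H^2_\nu(0,L)$ by hypothesis, so $(q,h)\in E$. The main obstacle is precisely this last step: it requires carefully tracking the comparisons between the four weights $\rho_1,\dots,\rho_4$ and between the Carleman weights $\bar\xi,\bar\alpha$ and their $x$-extremized versions $\bar\xi^*,\hat{\bar\xi},\bar\alpha^*,\hat{\bar\alpha}$ (so that the time-derivative of each weight is absorbed by a stronger one), treating the parabolic equation and the ODE simultaneously as in the combined estimate \eqref{est:finalmono}, and handling the Neumann boundary contributions.
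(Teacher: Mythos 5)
Your proposal follows essentially the same route as the paper: the Fursikov--Imanuvilov duality argument with the identical bilinear form built from $\mathcal{M}_1^*(\varphi)-\psi$ and $\mathcal{M}_2^*(\psi)+A\varphi$, Lax--Milgram on the completion of $P_0$ under the Carleman-induced norm of Lemma~\ref{theo:linearnullcontrol}, the control and state recovered exactly as in \eqref{eq:lagrange_charac}, and membership in $E$ obtained by weighting the system with powers of $\rho_4$ and invoking parabolic regularity, just as the paper does via the auxiliary systems for $\rho_4^{1/2}(\widehat\theta,\widehat q,\widehat h)$ and $\rho_4^{\bar r}(\widehat p,\widehat q,\widehat h)$. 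The only blemish is the claimed embedding $W^{1,2}(0,T;L^2)\cap L^2(0,T;H^1)\hookrightarrow C([0,L]\times[0,T])$, which fails (it is the borderline $H^1$ case in two space--time dimensions); the required $L^4$ and $L^6$ bounds instead follow from the one-dimensional Ladyzhenskaya embedding $L^\infty(0,T;L^2)\cap L^2(0,T;H^1)\hookrightarrow L^6((0,L)\times(0,T))$, which is precisely what the paper cites.
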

\begin{proof}
	We will follow the general method introduced and used in~\cite{fursikov-imanuvilov} for
	linear parabolic problems. The existence proof will be based in a Lax-Milgram argument.
	To motivate the introduction of the appropriate bilinear form it is useful to introduce
	the following auxiliary extremal problem
\begin{equation}\label{optimalcontrol}
\left\{
\begin{array}{llr}
	\dis \hbox{Minimize }   \ J(\theta,q,h) =  \frac{1}{2}\iiTL\! \left[\rho_1^2|\theta|^2+\rho_2^2|q|^2 +\rho_3^2|h|^21_{\om}\right]\,dx\,dt																\\
	\noalign{\smallskip}
	\dis \hbox{subject to }h\in L^2((0,L)\times(0,T)),\text{ supp}(h(\cdot,t))\subset\om(t)\,~\forall t\in(0,T)\text{ and } \\
	\noalign{\smallskip}
\left\{
\begin{array}{lcl}
			\mathcal{M}_1(\theta) + A(x,t)\,q- h1_{\omega}=G 		&  \mbox{in}&	(0,L) \times (0,T),		\\
			\noalign{\smallskip}\dis
			\mathcal{M}_2(q)  =  \theta					&  \mbox{in}&	(0,L) \times (0,T),		\\
			\noalign{\smallskip}\dis
			\sigma_i(x)q_{x}\big|_{x=0}=\sigma_i(x)q_{x}\big|_{x=L} = 0				& \mbox{on}&	 (0,T),	\\
			\noalign{\smallskip}\dis
			\theta(\cdot,0) =\theta_0,\quad q(\cdot,0) =q_0 		& \mbox{in}&	(0,L),
\end{array}
\right.
\end{array}
\right.
\end{equation}
	where $\theta_0=p_0-(\sigma_i (x)q_{0,x})_x +\ell_p(\bar v_0,\bar w_0)q_0\in L^2(0,L)$.

	Observe that due to the behavior of the weights $\rho_i$ at $t=T$
	 a solution $(\widehat{\theta},\widehat{q},\widehat{h})$ to \eqref{optimalcontrol} is a good candidate
	to satisfy  $(\widehat{q},\widehat{h})\in E$.

	Let us suppose for the moment that $(\widehat{\theta},\widehat{q},\widehat{h})$ solves \eqref{optimalcontrol}.
	Then, by the {\it Lagrange's multipliers formalism} the dual variables $\widehat{\varphi}$
	and $\widehat{\psi}$ satisfy the following system
\begin{equation}\label{eq:lagrange_charac}
\left\{
\begin{array}{lcl}
	\widehat{\theta}=\rho_1^{-2}[\mathcal{M}_1^*(\widehat{\varphi})- \widehat{\psi}]	& \text{in}& (0,L) \times (0,T), \\
	\noalign{\smallskip}
	\widehat{q}= \rho_2^{-2}[M^*_2(\widehat{\psi})+ A\widehat{\varphi}]           	& \text{in}& (0,L) \times (0,T), \\
	\noalign{\smallskip}
	\widehat{h}=-\rho_3^{-2}\widehat{\varphi}1_\om         					& \text{in}& (0,L) \times (0,T), \\
	\noalign{\smallskip}
	\sigma_i(x)\widehat{\psi}_{x}\big|_{x=0}=\sigma_i(x)\widehat{\psi}_{x}\big|_{x=L} = 0			& \mbox{on}&	 (0,T),	\\
\end{array}
\right.
   \end{equation}
	where $\mathcal{M}_i^*$ is the adjoint operator of $\mathcal{M}_i$ $(i=1,2)$, i.e.,
$$
	\mathcal{M}_1^*(\varphi)=-\varphi_t , \quad\hbox{and}\quad \mathcal{M}_2^*(\psi)=-\psi_t  +K\psi.
$$
	
	Now let us set:
$$
\begin{array}{l}
	\dis P_0 = \bigg\{\, (\varphi,\psi)\in C^{2}([0,L] \times [0,T];\mathbb{R}^2) :
	\sigma_i\psi_{x}\big|_{x=0}=\sigma_i\psi_{x}\big|_{x=L}= 0 \,~\mbox{in}\,~(0,T)\,\bigg\},
\end{array}
$$
	the bilinear form on $P_0\times P_0$
\begin{eqnarray*}
	\mathcal{B}((\widetilde{\varphi},\widetilde{\psi}),(\varphi,\psi))\hspace{-0.5cm}&& \dis =  \iiTL\rho_1^{-2}[\mathcal{M}^*_1(\widetilde{\varphi})- \widetilde{\psi}]
	[\mathcal{M}^*_1(\varphi)-\psi]\,dx\,dt
	\\
	\noalign{\smallskip}
	& & \quad \dis
	+ \iiTL\rho_2^{-2}[\mathcal{M}^*_2(\widetilde{\psi})+A \widetilde{\varphi}]
	[\mathcal{M}^*_2(\psi)+A \varphi]\,dx\,dt + \int_0^T \!\!\!\int_{\om(t)} \rho_3^{-2}\widehat{\varphi}\,\varphi\,dx\,dt
\end{eqnarray*}
	and the linear form on $P_0$
\begin{eqnarray*}
	\noalign{\smallskip}\dis
	\langle \mathcal{L},(\varphi,\psi)\rangle
	= \iiTL\! G\,\varphi \,dx\,dt
	+\int_0^L\theta_0\,\varphi(0)\,dx+\int_0^L q_0\,\psi(0)\,dx\nonumber.
\end{eqnarray*}
	For these definitions we note that $(\widehat{\varphi},\widehat{\psi})$ should satisfies
\begin{equation}\label{bilinear}
	\mathcal{B}((\widehat{\varphi},\widehat{\psi}),(\varphi,\psi))
	= \langle \mathcal{L},(\varphi,\psi)\rangle \quad \forall(\varphi,\psi)\in P_0,
   \end{equation}
	i.e.,~the solution to~\eqref{optimalcontrol} satisfies \eqref{bilinear}.
	Conversely, if we are able to solve \eqref{bilinear} in  suitable sense and then use \eqref{eq:lagrange_charac} to define
	$(\widehat{\theta},\widehat{q},\widehat{h})$, then we will be able to prove that we have found a solution to~\eqref{optimalcontrol}.

	Next we will focus on the Lax-Milgram problem \eqref{bilinear}.
	It is clear that $\mathcal{B}: P_0\times P_0 \to \mathbb{R}$ is a symmetric, definite positive and bilinear
	form on~$P_0$, i.e.~a scalar product in this linear space (thanks to the Carleman estimate~\eqref{carleman_0}). We will denote by $P$ the completion of $P_0$ for the
	norm induced by $\mathcal{B}$. Then $P$ is a Hilbert space for the scalar product $\mathcal{B}$.
	On the other hand, in view of the Carleman estimate~\eqref{carleman_0}, \eqref{eq:G} and the fact that $(\bar\xi)^{-1/2} e^{s\bar\alpha}\leq\rho_4$, the linear form
	$(\varphi,\psi)\mapsto \langle \mathcal{L},(\varphi,\psi)\rangle$ is well-defined and continuous on $P$.
	Hence, from {\it Lax-Milgram's lemma,} we deduce that the variational problem
\begin{equation}\label{eq:lax_milgram_pb}
			\mathcal{B}((\widehat{\varphi},\widehat{\psi}),(\varphi,\psi))
			=\langle \mathcal{L},(\varphi,\psi)\rangle \quad 	\forall (\varphi,\psi)\in P,
\end{equation}
	possesses exactly one solution $(\widehat{\varphi},\widehat{\psi})\in P$.

	With $(\widehat{\varphi},\widehat{\psi})$ given let $\widehat{\theta}$, $\widehat{q}$ and $\widehat{h}$ be given by \eqref{eq:lagrange_charac}.
   	It is readily seen that
\begin{equation}\label{eq:Jhat}
	J(\widehat{\theta},\widehat{q},\widehat{h})={1\over2}\mathcal{B}((\widehat{\varphi},\widehat{\psi}),(\widehat{\varphi},\widehat{\psi}))<+\infty
\end{equation}
	and, also, that $(\widehat{\theta},\widehat{q})$ is the unique weak solution to the system in
	\eqref{optimalcontrol} for $h =\widehat{h}$.
	
	Finally, it remains to prove that $(\widehat q,\widehat h)\in E$. Using \eqref{eq:Jhat} and
	the linear system in \eqref{optimalcontrol}, we can easily check that $
	\rho_1\mathcal{M}_2(\widehat q)$, $\rho_2 \widehat q$, $\rho_3\widehat h1_\om\in L^2((0,L)\times(0,T))$,
		$\rho_4[\mathcal{M}_1(\mathcal{M}_2(\widehat q)) + A\widehat q -\widehat  h1_{\omega}]\in L^2((0,L)\times(0,T))$ and
		$\widehat q(\cdot,0)\in H^2_\nu(0,L)$. We set $(\tilde \theta,\tilde q,\tilde h):=\rho_4^{1/2}( \widehat \theta, \widehat q, \widehat h)$ and
		we can see that $(\widetilde \theta,\widetilde q,\widetilde h)$ solves the following problem
\begin{equation}\label{eq:tilde q}
\left\{
\begin{array}{lcl}
			\mathcal{M}_1(\widetilde\theta) = - A(x,t)\,\widetilde q+ \rho_4^{1/2} G+ \widetilde h1_{\omega} +(\rho_4^{1/2})_{t}\widehat \theta		&  \mbox{in}&	(0,L) \times (0,T),		\\
			\noalign{\smallskip}\dis
			\mathcal{M}_2(\widetilde q)  =  \widetilde \theta+(\rho_4^{1/2})_{t} \widehat q					&  \mbox{in}&	(0,L) \times (0,T),		\\
			\noalign{\smallskip}\dis
			\sigma_i(x)\widetilde q_{x}\big|_{x=0}=\sigma_i(x)\widetilde q_{x}\big|_{x=L} = 0				& \mbox{on}&	 (0,T),	\\
			\noalign{\smallskip}\dis
			\widetilde\theta(\cdot,0) =\rho_4^{1/2} (0)\theta_0,\quad \widetilde q(\cdot,0) =\rho_4^{1/2}(0) q_0 		& \mbox{in}&	(0,L).
\end{array}
\right.
\end{equation}
		Notice that, thanks to the fact that $\rho_1\mathcal{M}_2(\widehat q)$ and $\rho_2 \widehat q$
		belong to $L^2((0,L)\times(0,T))$,
	we deduce  that $(\rho_4^{1/2})_{t}\widehat q, (\rho_4^{1/2})_{t}\widehat \theta\in L^2((0,L)\times(0,T))$.
	Thus, since $\theta_0\in L^2(0,L)$, $ q_0\in H^2_\nu(0,L)$, we have that
\begin{equation}\label{reg_tilde}
\left\{
	\begin{alignedat}{2}
		&\widetilde\theta\in W^{1,2}(0,T;L^2(0,L))\\
		&\widetilde q \in W^{1,2}(0,T;L^2(0,L))\cap L^2(0,T;H^2_\nu(0,L))\cap L^\infty(0,T;H^1(0,L)).
	\end{alignedat}
	\right.
\end{equation}	
	
	Notice that the linear system in \eqref{optimalcontrol} is equivalent to \eqref{eq:monoNulllinear} for $(\widehat p,\widehat q)$,
	with $\widehat p=\widehat q_t$,
	and control $h =\widehat{h}$. In fact, by a similar argument as used to obtain \eqref{reg_tilde},
	we get that $\rho_4^{3/4}p\in L^2((0,L)\times(0,T))$.
	We set $(\widetilde p,\widetilde q,\widetilde h):=\rho_4^{\bar r}( \widehat p, \widehat q, \widehat h)$ (with $\bar r=1/3$ and $\bar r=1/2$) and
		we observe that $(\widetilde p,\widetilde q,\widetilde h)$ solves the following problem
\begin{equation}\label{eq:monoNulllineartildep}
    \!\!\!\!\!\left\{
        \begin{array}{lcl}
        \widetilde p_t-(\sigma_i(x) \widetilde p_x)_x +\ell_p(\bar v,\bar w)\widetilde p+ \ell_q(\bar v,\bar w) \widetilde q
         = \rho_4^{\bar r} G+\widetilde h1_{\omega} +(\rho_4^{\bar r})_{t}\widehat p        &  \mbox{in}&    (0,L) \times (0,T),        \\
            \noalign{\smallskip}\dis
            \widetilde q_t  =   \widetilde p+(\rho_4^{\bar r})_{t}\widehat q                    &  \mbox{in}& (0,L) \times (0,T),        \\
            \noalign{\smallskip}\dis
            \sigma_i(x)\widetilde p_x\big|_{x=0}=\sigma_i(x)\widetilde p_x\big|_{x=L} = 0                & \mbox{on}& (0,T),    \\
            \noalign{\smallskip}\dis
            \widetilde p(\cdot,0) =\rho_4^{\bar r}(0)p_0,\quad \widetilde q(\cdot,0) =\rho_4^{\bar r} (0)q_0         & \mbox{in}&    (0,L),
        \end{array}
    \right.
\end{equation}
	
		Since $p_0\in L^2(0,L)$, $ q_0\in H^2_\nu(0,L)$ and
		$(\rho_4^{\bar r})_{t} \widehat q, (\rho_4^{\bar r})_{t}\widehat p\in L^2((0,L)\times(0,T))$,
		we deduce that
\begin{equation}\label{reg_tilde1}
\left\{
	\begin{alignedat}{2}
		&\widetilde q\in W^{1,2}(0,T;L^2(0,L))\\
		&\widetilde p \in L^2(0,T;H^1(0,L))\cap W^{1,2}(0,T;(H^1(0,L))')\cap L^\infty(0,T;L^2(0,L)).
	\end{alignedat}
	\right.
\end{equation}	
	We conclude using \cite[Chapter II. $\S3$]{Lady} to get  $\widetilde p\in L^{\tilde\kappa}((0,L)\times (0,T))$, for $1\leq {\tilde\kappa}\leq 6$.
	
	Finally, we need to argue that \eqref{eq:nullcontrol_p_q} holds. For this propose, we use that $(\widehat q,\widehat h)\in E$
	and consequently $(\rho_2\widehat q,\rho_1\widehat \theta)\in L^2((0,L)\times(0,T))\times L^2((0,L)\times(0,T))$.
	Since $\widehat q,\widehat \theta\in C^0([0,T];L^2(0,L))$ the singularities of $\rho_1$ and $\rho_2$
	imply \eqref{eq:nullcontrol_p_q}.
\end{proof}


 \subsection{Controllability for the nonlinear monodomain model}\label{Section:monodomainnonlinear}

	{ In this section we will prove Theorem ~\ref{teo1} using the results obtained
	in the previous section which allow us to locally invert a nonlinear equation.
	For the latter we rely on the {\it Lyusternik-Graves inverse mapping theorem}, see \cite[Chapter $2$, p. $107$]{A-T}:}
\begin{thm} \label{inversemaptheo}
	Let $B_1$ and $B_2$ be two Banach spaces and let $ \mathcal{A} : B_1 \mapsto B_2 $ satisfy $ \mathcal{A} \in C^1(B_1;B_2)$.
	Assume that $e_0 \in B_1$, $\mathcal{A}(e_0) = i_0 $ and $\mathcal{A}'(e_0):B_1 \mapsto B_2 $ is surjective. Then,
	there exists $ \delta > 0$ such that, for every $ i \in B_2$ satisfying $\| i - i_0\|_{B_2} < \delta$, there exists one solution
	to the equation
$$
	\mathcal{A} (e) = i, \quad e \in B_1.
$$
\end{thm}
	
	We shall apply this result with $B_1=E$, $B_2=F_1\times F_2$ and for any $e = (q,h) \in B_1$ we set
$$
	\mathcal{A}(e) =(\mathcal{M}_1(\mathcal{M}_2(q)) + Aq - h1_{\omega}+\mathcal{N}(\mathcal{M}_1(q),q), q(0),\mathcal{M}_1(q)(0)).
$$
	Here, $F_1=\rho_4^{-1} L^2((0,L)\times(0,T))$ and $F_2= H^2_\nu(0,L)\times L^2(0,L)$.

	Thanks to the definition of the space $E$, it is not difficult to check that $\mathcal{A}$, which contains linear, bilinear and trilinear terms, is continuous and therefore $\mathcal{A}\in C^1(B_1;B_2)$. Let $e_0$ be the
	origin of~$B$. Notice that $\mathcal{A}'(e_0): B_1 \mapsto B_2$ is the mapping that, to each $e=(q,h)\in B_1$, associates the
	function $(\mathcal{M}_1(\mathcal{M}_2(q)) + Aq - h1_{\omega} , q(0),\mathcal{M}_1(q)(0))$ in~$B_2$. In view of the null controllability result for \eqref{eq:monoNulllinear}
	given in Proposition \ref{nullcontrol},  $\mathcal{A}'(e_0)$ is surjective.

	Consequently, we can indeed apply Theorem \ref{inversemaptheo} with these data and, in particular, there exists
	$\delta >0$ such that, if
\[
	\|(0,q_0,p_0)\|_{B_2}=\|(q_0,p_0)\|_{H^2_\nu(0,L)\times L^2(0,L)} \leq \delta,
\]
	we can find a control $h$, with $\textnormal{supp}\,h(\cdot,t)\subset\om(t)\,~\forall t\in(0,T)$, such that the associated solution to~\eqref{eq:monoNull1} satisfies $p(\cdot,T) =0$
	and $q(\cdot,T)=0$ in $(0,L)$.
	
	\
	
	This concludes the proof of Theorem~\ref{teo1}.

\

	\begin{rmq}\label{rmq_AA}{ Do to its practical relevance we have  chosen to focus our attention on the PDE-ODE coupled system which arises from the monodomain equations. However, controllability result that we obtained for $\eqref{eq:monoT}$ can be extended to cover more general situations. For this purpose we assume that
\begin{equation}\label{eq:kk1}
F\in C^2(\mathbb{R}^2;\mathbb{R}),\;  B\in W^{1,\infty}(0,L)  \text{ and  }C\in W^{1,\infty}(0,T;L^\infty(0,L))
\end{equation}
and consider
 	\[
    \left\{
        \begin{array}{lcl}
        v_t-(\sigma_i(x) v_x)_x+{ B(x)v_x+C(x,t)v}={ F(v,w)}+\mathcal{I}_{s,i}{-\mathcal{I}_{s,e}} &\mbox{in}&(0,L) \times (0,T),    \\
        \noalign{\smallskip}\dis
        w_t   +g(v,w)= 0 &\mbox{in}&(0,L) \times (0,T),    \\
        \noalign{\smallskip}\dis
        \sigma_i(x)v_x\big|_{x=0}=\sigma_i(x)v_x\big|_{x=L} = 0 &\mbox{on}&(0,T),            \\
        \noalign{\smallskip}\dis
        v(\cdot,0) =v_0,\quad w(\cdot,0) =w_0 &\mbox{in}&(0,L).
        \end{array}
    \right.
\] 
We further fix a {\it trajectory} $(\bar{v},\bar{w})$ which satisfies 
\begin{equation}\label{eq:kk2}
\bar v \text{ and } \bar w \in W^{1,\infty}(0,T;L^\infty(0,L))
\end{equation}

and is a solution to the related uncontrolled system\,
\[
    \left\{
        \begin{array}{lcl}
        \bar v_t
        -(\sigma_i(x) \bar v_x)_x+{ B(x)\bar v_x+C(x,t)\bar v}
        =F(\bar v,\bar w)+\mathcal{I}_{s,i}            &\mbox{in}&(0,L) \times (0,T),    \\
        \noalign{\smallskip}\dis
        \bar w_t   +g(\bar v,\bar w)= 0         &\mbox{in}&(0,L) \times (0,T),    \\
        \noalign{\smallskip}\dis
        \sigma_i(x)\bar v_x\big|_{x=0}=\sigma_i(x)\bar v_x\big|_{x=L} = 0    &\mbox{on}&(0,T),            \\
        \noalign{\smallskip}\dis
        \bar v(\cdot,0) =\bar v_0,\quad \bar w(\cdot,0) =\bar w_0           &\mbox{in}&(0,L).
        \end{array}
    \right.
\]
	Then, we can relate the exact controllability to trajectories for $(v,w)$ to the null controllability
	of a linearized system
\[
    \!\!\!\!\!\left\{
        \begin{array}{lcl}
         \!\!\!p_t-(\sigma_i(x) p_x)_x +{ B(x) p_x+\widetilde C(x,t)p}
         ={\partial F\over \partial v}(\bar v,\bar w)p +\gamma{\partial F\over \partial w}(\bar v,\bar w)q +G+h1_{\omega}         &  \mbox{in}&    (0,L) \times (0,T),      \\
            \noalign{\smallskip}\dis
            q_t  =   p                    &  \mbox{in}& (0,L) \times (0,T),        \\
            \noalign{\smallskip}\dis
            \sigma_i(x)p_x\big|_{x=0}=\sigma_i(x)p_x\big|_{x=L} = 0            & \mbox{on}& (0,T),    \\
            \noalign{\smallskip}\dis
            p(\cdot,0) =p_0,\quad q(\cdot,0) =q_0         & \mbox{in}&   (0,L),
        \end{array}
    \right.
\]
    where $G$ belongs to a space of functions that decay exponentially as $t\rightarrow T^-$
    and $\widetilde C(x,t):=C(x,t)-\gamma\beta$ and ${ h:=-\gamma e^{\gamma\beta t}\mathcal{I}_{s,e}}$
    is the control and $1_\omega$ is the characteristic function of $\omega$.

	Considering the change of variables $\theta =q_t-(\sigma_i(x)q_x)_x+{ B(x)q_x+[\widetilde C(x,t)-{\partial F\over \partial v}(\bar v,\bar w)]q}$,  the null controllability for the linearized system is equivalent to the observability
	of the following adjoint system
\[
	\left\{
		\begin{array}{lcl}
			- \varphi_t  -\psi=R 					&\mbox{in}&	 (0,L) \times (0,T), 		\\
			\noalign{\smallskip}\dis
			-\psi_t-(\sigma_i(x)\psi_x)_x-({ B(x)\psi})_x+{\left[\widetilde C(x,t)-{\partial F\over \partial v}(\bar v,\bar w)\right]}\psi +{\widehat C(x,t)}\varphi=S 	&\mbox{in}&	 (0,L) \times (0,T),  		\\
			\noalign{\smallskip}\dis
			\sigma_i(x)\psi_{x}\big|_{x=0}=\sigma_i(x)\psi_{x}\big|_{x=L} = 0     				&\mbox{on}&	(0,T), 	\\
			\noalign{\smallskip}\dis
			\varphi(T) = \varphi_T,\quad\psi(T) = \psi_T 		&\mbox{in}&	(0,L),
		\end{array}
	\right.
\]
	where
	$$
	\widehat C(x,t)=\partial_t\!\!\left[{\partial F\over \partial v}(\bar v,\bar w)-\widetilde A(x,t)\right]-\gamma{\partial F\over \partial w}(\bar v,\bar w)={\partial^2 F\over \partial v^2}(\bar v,\bar w)\bar v_t+
	{\partial^2 F\over \partial w \partial v}(\bar v,\bar w)\bar w_t-\widetilde A_t(x,t)-\gamma{\partial F\over \partial w}(\bar v,\bar w).$$
	 Now, with \eqref{eq:kk1} and \eqref{eq:kk2} holding one can verify  a Carleman inequality as in Theorem \ref{thm:Carl_mono} and we obtain exact null controllability of the linearized system.
	
	The proof of exact controllability of the nonlinear system to $(\bar v, \bar w)$  depends on the
the specific growth properties of $F$.  
The spaces  $E$, $F_1$ and $F_2$ reflecting the properties of $F$,  need to be construncted,  such that the Nemytskii operator associated to $F$ is
	$C^1(E;F_1)$ and the solution for the linear system, with $G\in F_1$ and initial conditions in $F_2$, belongs to $E$.}
\end{rmq}	

%
%



\begin{appendix}
%
%
%


\section{Neumann Carleman inequality for a parabolic equation}\label{App:mov_lapla}

	To verify Lemma \ref{lemma:elliptic} we start with the following remark.
\begin{rmq}\label{rmk:g}
	Notice that the function $r$ blows up at $t=0$ and $t=T$.
	Also, observe that $\partial^k_tr(t)={(-1)^k k!\over t^{k+1}}=(-1)^k k!(r(t))^{k+1}$ close to $t=0$ and
	$\partial^k_tr(t)={k!\over (T-t)^{k+1}}=k!(r(t))^{k+1}$ close $t=T$.
\end{rmq}

	Further we have
\begin{equation}\label{eq:alpha_deriv}
\!\!\!\!\!\!
\begin{alignedat}{2}
	\alpha_x =&~\lambda\xi(-\eta_x),~ \\
	\alpha_{xx}=&~\lambda^2\xi(-\eta_x^2 -\lambda^{-1}\eta_{xx}),\\
	\alpha_{xxx}=&~\lambda^3\xi(- \eta_x^3 - 3\lambda^{-1} \eta_{xx}\eta_x-\lambda^{-2}\eta_{xxx}),\\
	\alpha_{xxxx}=&~\lambda^4\xi(- \eta_x^4 - 6\lambda^{-1}\eta_x^2\eta_{xx}- 3\lambda^{-2}\eta_{xx}^2
	-\lambda^{-3}\eta_{xxxx}),\\
	\alpha_t =&~\lambda\xi^2\left[-\xi^{-1}\eta_t+\lambda^{-1}\left(e^{2\lambda(\|\eta\|_\infty-\eta)}-e^{-\lambda\eta}\right)\left({1\over r} \right)_t\right],\\
	\alpha_{tt}=&~\lambda^2\xi^3\!\!\left[-\xi^{-2}\eta^2_t-\lambda^{-1}\xi^{-2}\eta_{tt}
	-2\lambda^{-1}e^{-2\lambda\eta}\eta_t{r_{t}\over r^3}+\lambda^{-2}\left(e^{\lambda(2\|\eta\|_\infty-3\eta)}-e^{-2\lambda\eta}\right)
	{r_{tt}\over r^3}\right],\\
	\alpha_{xt} =&~\lambda^2\xi^2\left[-\xi^{-1}\eta_t\eta_x-\lambda^{-1}\xi^{-1}\eta_{xt}+\lambda^{-1}e^{-\lambda\eta}(r^{-1})_t \eta_x\right].
	\end{alignedat}
\end{equation}
	It follows that there exists a positive constant $C>0$, such that for $(x,t)\in [0,L]\times(0,T)$
	we have the following pointwise estimates:
\begin{equation}\label{eq:alpha_esti}
\begin{alignedat}{2}
	|\alpha_{xx}|\leq&~C\lambda^2\xi,\\
	|\alpha_{xxx}|\leq&~C\lambda^3\xi,\\
	|\alpha_{xxxx}|\leq&~C\lambda^4\xi,\\
	|\alpha_t| \leq&~C(T+e^{2\lambda\|\eta\|_\infty})\lambda\xi^2,\\
	|\alpha_{tt}| \leq&~C(T^2+T+e^{2\lambda\|\eta\|_\infty})\lambda^2\xi^3,\\
	|\alpha_{xt}| \leq&~C(T+1)\lambda^2\xi^2.
	\end{alignedat}
\end{equation}
	
	We set $w=e^{-s\alpha}\psi$ and observe that
$$
	w_x=-s\alpha_xw+e^{-s\alpha}\psi_x.
$$
	Using the boundary conditions of $\psi$, we deduce $w_x=-s \alpha_x w $ on $\{0,L\}\times(0,T)$.
\begin{rmq}\label{rmk:initialcondition}
	From the definition of $\alpha$, given in \eqref{alpha}, notice that
	$w(\cdot,T)=w(\cdot,0)=0$ and $w_x(\cdot,T)=w_x(\cdot,0)=0$.
\end{rmq}		
	
	Now, let us introduce the partial differential operator
	$P:=\partial_t+\partial_{x}(\sigma_i\partial_x)$. Then, we have the following decomposition
\[
	e^{-s\alpha} P(e^{s\alpha} w ) = P_e\, w + P_k\,w,
\]
	where
\[
	P_{e} w:=\partial_{x}(\sigma_i\partial_xw)+(s\alpha_t+s^2\sigma_i\alpha_x^2)w
\]
	is the self-adjoint part of the operator $P$ and
\[
	P_{k}w:=w_t+2s\sigma_i\alpha_xw_x+s\partial_{x}(\sigma_i\partial_x\alpha) w
\]
	is the skew-adjoint part of $P$. Without loss of generality we can suppose $\sigma_i=1$. Then we have
$$
	P=\partial_t+\partial_{xx},~
	P_{e} w=w_{xx}+(s\alpha_t+s^2\alpha_x^2)w~
	\hbox{and}~
	P_{k}w=w_t+2s\alpha_xw_x+s\alpha_{xx} w.
$$
It follows that
\begin{equation}\label{eq:L2norm}
\begin{alignedat}{2}
	\|e^{-s\alpha} P(e^{s\alpha} w )\|^2_{L^2((0,L)\times(0,T))}=&~\|P_e w\|^2_{L^2((0,L)\times(0,T))}+\|P_k w\|^2_{L^2((0,L)\times(0,T))}\\
										&+2(P_e w,P_k w)_{L^2((0,L)\times(0,T))}.
\end{alignedat}
\end{equation}

	The rest of the proof is devoted to analyzing the term $(P_e w,P_k w)_{L^2((0,L)\times(0,T))}$. Indeed, from the above definition
	of the operators $P_e$ and $P_k$ it follows
\begin{equation}\label{eq:pepk}
\begin{alignedat}{2}
	2(P_e w, P_k w)_{L^2((0,L)\times(0,T))} =	&~2\left(w_{xx}, w_t\right)+2\left(w_{xx},2s\alpha_x w_x\right)
						+2\left(w_{xx},s\alpha_{xx} w\right)\\
						&~+2\left(s\alpha_tw+s^2\alpha_x ^2w,w_t\right)
						+2\left(s\alpha_tw+s^2 \alpha _x^2w,2s\alpha_x w_x\right)\\
						&~+2\left(s\alpha_tw+s^2\alpha_x^2w,s\alpha_{xx} w\right)   \\
	=:&~I_1 + I_2 + I_3 + I_4+I_5+I_6.
\end{alignedat}
\end{equation}

	Now, in order to get estimates for the term $2(P_e w, P_k w)_{L^2((0,L)\times(0,T))}$,
	first let us work with each integral term $I_i$, $i=1,\ldots,6$.
	
	For the first integral term, we integrate by parts in time and we obtain
\[
	I_1=- 2\ii w_{x}w_{xt}+2\int_0^T
	      w_t  w_x\bigg|_{x=0}^{x=L}.
\]
	Then, thanks to Remark \ref{rmk:initialcondition} and the fact that $w_x=-s \alpha_x w $ on $\{0,L\}\times(0,T)$,
	after an integration by parts in the last term we have
\begin{equation}\label{eq:I_1}
\begin{alignedat}{2}
	I_1=&~s\int_0^T \alpha_{xt}w^2\bigg|_{x=0}^{x=L}.
\end{alignedat}
\end{equation}

	For the second term, we integrate by parts in space and we deduce
\begin{equation}\label{eq:I_2}
\begin{alignedat}{2}
	I_2
	=&~-2s\ii \alpha_{xx}w_x^2 +2s\int_0^T \alpha_xw_x^2\bigg|_{x=0}^{x=L}.
\end{alignedat}
\end{equation}

	For the third term,  after two integration by parts, we obtain
\[
	I_3=- 2s \ii \alpha_{xxx}w_xw  -2s \ii \alpha_{xx}  w_x^2+2s\int_0^T \alpha_{xx} w  w_x\bigg|_{x=0}^{x=L}.
\]

	And again integrating by parts in space the first term, we deduce
\begin{equation}\label{eq:I_3}
\begin{alignedat}{2}
	I_3=&~ s \ii \alpha_{xxxx}w^2  -2s \ii \alpha_{xx}  w_x^2
	 +2s\int_0^T \alpha_{xx} w  w_x\bigg|_{x=0}^{x=L} -s\int_0^T \alpha_{xxx}w^2\bigg|_{x=0}^{x=L}.
\end{alignedat}
\end{equation}
	
	Fourthly, we integrate by parts in time and using Remark  \ref{rmk:initialcondition}, we get
\begin{equation}\label{eq:I_4}
\begin{alignedat}{2}
	I_4=&~  -s\ii \alpha_{tt} w^2-2s^2\ii \alpha_x\alpha_{xt} w^2.
\end{alignedat}
\end{equation}		
	
		And for the fifth term, we conclude
\begin{equation}\label{eq:I_5}
\begin{alignedat}{2}
	I_5=&~ -\ii \left(2s^2\alpha_x\alpha_{xt}+4s^3\alpha_x^2\alpha_{xx}\right)w^2-
	      2\ii (s^2\alpha_t+s^3\alpha_x^2)\alpha_{xx} w^2\\
	      &~+2\int_0^T (s^2\alpha_t\alpha_x+s^3\alpha_x^3)  w^2\bigg|_{x=0}^{x=L}  .
\end{alignedat}
\end{equation}

	For the last term, we obtain	
\begin{equation}\label{eq:I_6}
\begin{alignedat}{2}
	I_6=&~ 2\ii (s^2\alpha_t+s^3\alpha_x^2) \alpha_{xx} w^2.
\end{alignedat}
\end{equation}

From \eqref{eq:pepk}-\eqref{eq:I_6}, we  get
\begin{align*}
	2(P_ew,P_kw) =&~-4s\ii \alpha_{xx}w_x^2+ \ii \left(s\alpha_{xxxx}-s\alpha_{tt}-4s^2\alpha_x\alpha_{xt}
	-4s^3\alpha_x^2\alpha_{xx} \right)w^2\\
	&+\int_0^T\left(2s\alpha_x w_x^2 +2s \alpha_{xx} ww_x+2s^2\alpha_t\alpha_xw^2+2s^3\alpha_x^3w^2+s\alpha_{xt}w^2
	- s\alpha_{xxx}w^2 \right)\bigg|_{x=0}^{x=L}.
\end{align*}
	
	Recalling that  $w_x=-s \alpha_x w $ on $\{0,L\}\times(0,T)$, from the previous identity we deduce
\begin{equation}\label{eq:inner}
\begin{alignedat}{2}	
	2(P_ew,P_kw)=&~-4s\ii \alpha_{xx}w_x^2+ \ii \left(s\alpha_{xxxx}-s\alpha_{tt}-4s^2\alpha_x\alpha_{xt}
	-4s^3\alpha_x^2\alpha_{xx} \right)w^2\\
	&+\int_0^T\left[4s^3\alpha_x^3 +2s^2\alpha_x\left(\alpha_t - \alpha_{xx}\right) +s(\alpha_t-\alpha_{xx})_x\right]
	w^2\bigg|_{x=0}^{x=L} \\
	&= DT_1+DT_2+BT,
\end{alignedat}
\end{equation}
	where $DT_1$ and $DT_2$ correspond to the distributed terms and $BT$ corresponds to the boundary terms.

Then, thanks to the identities \eqref{eq:alpha_deriv}, we obtain
\begin{align*}
	BT=&~-4s^3\lambda^3\int_0^T\eta_x^3\xi^3 w^2\bigg|_{x=0}^{x=L}
	-2s^2\lambda\int_0^T\left(\alpha_t - \alpha_{xx}\right)\eta_x\xi w^2\bigg|_{x=0}^{x=L}
	+s\int_0^T (\alpha_t-\alpha_{xx})_xw^2\bigg|_{x=0}^{x=L}.
\end{align*}	
	Therefore, using the estimates \eqref{eq:alpha_esti} and the property \eqref{P7}-\eqref{P8} of $\eta$,
	we have the following bound
\begin{align*}
	BT\geq&~4C^3s^3\lambda^3\int_0^T\xi^3 w^2\big|_{x=L}
	-2s^2\lambda^2C^2\left(T+e^{2\lambda\|\eta\|_\infty}\right)\int_0^T\xi^3 w^2\big|_{x=L}
	-2s^2\lambda^3C^2\int_0^T\xi^2 w^2\big|_{x=L}\\
	&~-C(T+1)s\lambda^2\int_0^T \xi^2w^2\big|_{x=L}
	-Cs\lambda^3\int_0^T \xi w^2\bigg|_{x=L}\\
	&~+4C^3s^3\lambda^3\int_0^T\xi^3 w^2\big|_{x=0}
	-2s^2\lambda^2C^2\left(T+e^{2\lambda\|\eta\|_\infty}\right)\int_0^T\xi^3 w^2\big|_{x=0}
	-2s^2\lambda^3C^2\int_0^T\xi^2 w^2\big|_{x=0}\\
	&~-C(T+1)s\lambda^2\int_0^T \xi^2w^2\big|_{x=0}
	-Cs\lambda^3\int_0^T \xi w^2\bigg|_{x=0}.
\end{align*}

	Hence, we have for any $\lambda\geq C$ and any
	$s\geq C\left(1+T+e^{2\lambda\|\eta\|_\infty}\right)$:
\begin{equation}\label{eq:BT}
	BT\geq4C^3s^3\lambda^3\int_0^T\xi^3 w^2\big|_{x=L}+4C^3s^3\lambda^3\int_0^T\xi^3 w^2\big|_{x=0}.
\end{equation}

	Now, let us estimate the distributed terms in $DT$. Thanks to \eqref{P1} and \eqref{eq:alpha_deriv},
	for $DT_1$ we have:
\begin{align*}\label{eq:BT}
	DT_1=&~4s\lambda^2\ii \eta_x^2\xi w_x^2
	+4s\lambda\ii \eta_{xx}\xi w_x^2\\
	\geq&~Cs\lambda^2\ii \xi w_x^2 - Cs\lambda^2\int_0^T\!\!\!\int_{\om_0(t)}\xi w_x^2
	-Cs\lambda\ii \xi w_x^2.
\end{align*}
	Hence, taking $\lambda\geq C$, we obtain
\begin{equation}\label{eq:DT1}
	DT_1\geq Cs\lambda^2\ii \xi w_x^2 - Cs\lambda^2\int_0^T\!\!\!\int_{\om_0(t)}\xi w_x^2.
\end{equation}

	Also, in order to get an estimate for $DT_2$, we use
	\eqref{P1}, \eqref{P2}, \eqref{eq:alpha_deriv} and \eqref{eq:alpha_esti} to obtain:
\begin{align*}\label{eq:BT}
	DT_2
	\geq&~ Cs^3\lambda^4\ii\xi^3 w^2- Cs^3\lambda^4\int_0^T\!\!\!\int_{\om_0(t)}\xi^3 w^2
	-Cs^3\lambda^3\ii\xi^3 w^2\\
	&~-Cs\lambda^4\ii \xi w^2-Cs\lambda^2\ii \xi^3 w^2 - Cs^2\lambda^3\ii\xi^3 w^2.
\end{align*}	
	Therefore, for $\lambda\geq C$, we deduce
\begin{equation}\label{eq:DT2}
	DT_2
	\geq Cs^3\lambda^4\ii\xi^3 w^2- Cs^3\lambda^4\int_0^T\!\!\!\int_{\om_0(t)}\xi^3 w^2.
\end{equation}	

	From \eqref{eq:L2norm}, \eqref{eq:inner}, \eqref{eq:BT},  \eqref{eq:DT1} and \eqref{eq:DT2}, we
	conclude that
\begin{equation}\label{eq:concl1}
\begin{alignedat}{2}
	&~\|P_e w\|^2_{L^2((0,L)\times(0,T))}+\|P_k w\|^2_{L^2((0,L)\times(0,T))}+s^3\lambda^4\ii\xi^3 w^2+s\lambda^2\ii \xi w_x^2\\
	&~+s^3\lambda^3\int_0^T\xi^3 w^2\big|_{x=L}+s^3\lambda^3\int_0^T\xi^3 w^2\big|_{x=0}\\
	\leq&~ C\left(\|e^{-s\alpha} P(e^{s\alpha} w )\|^2_{L^2((0,L)\times(0,T))} +
	s\lambda^2\int_0^T\!\!\!\int_{\om_0(t)}\xi w_x^2
	+s^3\lambda^4\int_0^T\!\!\!\int_{\om_0(t)}\xi^3 w^2\right).
\end{alignedat}
\end{equation}
	
	Now, using that $P_{e} w=w_{xx}+(s\alpha_t+s^2\alpha_x^2)w$, we can deduce:
\begin{equation}\label{eq:laplac}
\begin{alignedat}{2}
	s^{-1}\ii\xi^{-1} w^2_{xx}
	=&~ s^{-1}\ii\xi^{-1}  |P_e w-(s\alpha_t+s^2\alpha_x^2)w|^2\\
	\leq&~ C s^{-1}\ii\xi^{-1}\left(  |P_e w|^2+ s^2\alpha_t^2w^2+s^4\alpha_x^4w^2\right)\\
	\leq&~ C s^{-1}\ii\xi^{-1}\left(  |P_e w|^2+ s^2\lambda^2\xi^4w^2+s^4\lambda^4\xi^4w^2\right)\\
	\leq&~ C\left( s^{-1}\ii\xi^{-1} |P_e w|^2+\ii s^3\lambda^4\xi^3w^2\right).
\end{alignedat}
\end{equation}

	We can do the same for $P_{k}w:=w_t+2s\alpha_xw_x+s\alpha_{xx} w$ and then
\begin{equation}\label{eq:time}
\begin{alignedat}{2}
	s^{-1}\ii\xi^{-1} w^2_{t}
	=&~ s^{-1}\ii\xi^{-1}  |P_k w-2s\alpha_xw_x-s\alpha_{xx} w|^2\\
	\leq&~ C s^{-1}\ii\xi^{-1}\left(  |P_k w|^2+ s^2\alpha_x^2w^2_x+s^2\alpha_{xx}^2w^2\right)\\
	\leq&~ C s^{-1}\ii\xi^{-1}\left(  |P_k w|^2+ s^2\lambda^2\xi^2w^2_x+s^4\lambda^4\xi^2w^2\right)\\
	\leq&~ C\left( s^{-1}\ii\xi^{-1} |P_k w|^2+\ii s\lambda^2\xi^2w^2_x+\ii s^3\lambda^4\xi^3w^2\right).
\end{alignedat}
\end{equation}

	From \eqref{eq:concl1}, \eqref{eq:laplac} and \eqref{eq:time}, we obtain
\begin{equation}\label{eq:concl4}
\begin{alignedat}{2}
	&~s^{-1}\ii\xi^{-1} w^2_{t}+s^{-1}\ii\xi^{-1} w^2_{xx}+s^3\lambda^4\ii\xi^3 w^2+s\lambda^2\ii \xi w_x^2\\
	&~+s^3\lambda^3\int_0^T\xi^3 w^2\big|_{x=L}+s^3\lambda^3\int_0^T\xi^3 w^2\big|_{x=0}\\
	\leq&~ C\left(\|e^{-s\alpha} P(e^{s\alpha} w )\|^2_{L^2((0,L)\times(0,T))} +
	s\lambda^2\int_0^T\!\!\!\int_{\om_0(t)}\xi w_x^2
	+s^3\lambda^4\int_0^T\!\!\!\int_{\om_0(t)}\xi^3 w^2\right).
\end{alignedat}
\end{equation}

	To conclude the proof, we need to eliminate the local term $w_x$ containing on the right hand side of the previous inequality.
	To do that, let
	us introduce a
	function $\bar\zeta \in C^\infty([0,L]\times[0,T])$ satisfying
\[
	\begin{array}{lll}
	&0\le \bar\zeta\le 1&\forall (x,t)\in [0,L]\times[0,T],\\
	\noalign{\smallskip}\dis
	&\bar\zeta(x,t) =1&\forall x\in\om_0(t),~\forall t\in[0,T],\\
	\noalign{\smallskip}\dis
	&\bar\zeta(x,t) =0& \forall x\in [0,L]\setminus \overline{\om_1(t)},~ \forall t\in[0,T],
	\end{array}
\]
	where we use \eqref{eq:om1om0}.
	
	We have
\[
\begin{alignedat}{2}
	s\lambda^2\int_0^T\!\!\!\int_{\om_0(t)}\xi w_x^2
	\leq&~ s\lambda^2\ii \bar\zeta\xi w_x^2 \\
	=&~ - s\lambda^2\ii \bar\zeta\xi w_{xx}w- s\lambda^2\ii \bar\zeta_x\xi w_{x}w- s\lambda^2\ii \bar\zeta\xi_x w_{x}w\\
	&~+s\lambda^2\int_0^T\xi w_xw\bigg|^{x=L}_{x=0}.
\end{alignedat}
\]	
	Using the fact that $w_x=-s \alpha_x w $ on $\{0,1\}\times(0,T)$ and \eqref{P7}-\eqref{P8} and \eqref{eq:alpha_deriv}, we deduce
\[
\begin{alignedat}{2}
	Cs\lambda^2\int_0^T\!\!\!\int_{\om_0(t)}\xi w_x^2
	\leq&~ - Cs\lambda^2\ii \bar\zeta\xi w_{xx}w- Cs\lambda^2\ii \bar\zeta_x\xi w_{x}w- Cs\lambda^3\ii \bar\zeta\eta_x w_{x}w\\
	&~+Cs^2\lambda^3\int_0^T\xi^2 \eta_xw^2\bigg|^{x=L}_{x=0}\\
	\leq&~ {1\over2} s^{-1}\ii\xi^{-1} w_{xx}^2+{1\over2} s\lambda^2\ii\xi w_{x}^2
	+{C\over2} s^3\lambda^4\ii (\bar\zeta^2+\bar\zeta_x^2) \xi^3w^2.
\end{alignedat}
\]	
	Then, we obtain from the above inequality and \eqref{eq:concl1}
\begin{equation}\label{eq:concl2}
\begin{alignedat}{2}
	&~s^{-1}\ii\xi^{-1} w^2_{t}+s^{-1}\ii\xi^{-1} w^2_{xx}+s^3\lambda^4\ii\xi^3 w^2+s\lambda^2\ii \xi w_x^2\\
	&~+s^3\lambda^3\int_0^T\xi^3 w^2\big|_{x=L}+s^3\lambda^3\int_0^T\xi^3 w^2\big|_{x=0}\\
	\leq&~ C\left(\|e^{-s\alpha} P(e^{s\alpha} w )\|^2_{L^2((0,L)\times(0,T))} +s^3\lambda^4\int_0^T\!\!\!\int_{\om_1(t)}\xi^3 w^2\right).
\end{alignedat}
\end{equation}

	We finally can turn back to $\psi$:
\begin{equation}\label{eq:concl2}
\begin{alignedat}{2}
	&~s^{-1}\ii\!\!\!\!\xi^{-1} \psi^2_{t}e^{-2s\alpha}+s^{-1}\ii\!\!\!\!\xi^{-1} \psi^2_{xx}e^{-2s\alpha}+s\lambda^2\ii \!\!\!\!\xi \psi_x^2e^{-2s\alpha}+s^3\lambda^4\ii\!\!\!\!\xi^3 \psi^2e^{-2s\alpha}\\
	&~+s^3\lambda^3\int_0^T\xi^3 \psi^2e^{-2s\alpha}\big|_{x=L}+s^3\lambda^3\int_0^T\xi^3 \psi^2e^{-2s\alpha}\big|_{x=0}\\
	&~\leq C\left(\ii e^{-2s\alpha} f^2+s^3\lambda^4\int_0^T\!\!\!\int_{\om_1(t)}\xi^3 \psi^2e^{-2s\alpha}\right)
\end{alignedat}
\end{equation}
	and hence \eqref{carleman:neumann} follows.
\end{appendix}

\section*{Acknowledgements}
This work has been partially done while the second author was visiting the
		Karl-Franzens-Universit\"at (Graz, Austria).
		He wishes to thank the members of the Institut f\"ur Mathematik und Wissenschaftliches Rechnen for their kind hospitality.


\section*{References}

\end{document}